\documentclass[10pt]{amsart}
\allowdisplaybreaks

% PACKAGES
\usepackage{amsfonts}
\usepackage{amsmath}
\usepackage{amssymb}
\usepackage{amsthm}
\usepackage{amsbsy}
\usepackage{xcolor}
\usepackage{graphicx} \usepackage{enumerate} \usepackage{multicol}
\usepackage{mathrsfs} \usepackage[all,cmtip]{xy}
\usepackage[color=blue!20,textsize=footnotesize]{todonotes}
%\usepackage{lscape}

% THEOREMS ETC
% \newtheorem{dummy}{Dummy}[section]
\newcounter{dummy} \numberwithin{dummy}{section}
\newtheorem{theorem}[dummy]{Theorem}
\newtheorem{corollary}[dummy]{Corollary}
\newtheorem{lemma}[dummy]{Lemma}

\newtheorem{proposition}[dummy]{Proposition}
\theoremstyle{remark}
\newtheorem{remark}[dummy]{Remark}

% SPECIAL CHARACTERS
%% Calligraphic letters

\newcommand{\calL}{\mathcal{L}}
\newcommand{\calH}{\mathcal{H}}

\newcommand{\calX}{\mathcal{X}}

%% Bold

%% Blackboard bold

%% Script letters

\newcommand{\scrF}{\mathscr{F}}

\newcommand{\scrO}{\mathscr{O}}

%% German fraktur

\newcommand{\frakT}{\mathfrak{T}}
\newcommand{\frakC}{\mathfrak{C}}
\newcommand{\frakE}{\mathfrak{E}}

\newcommand{\frakP}{\mathfrak{P}}

\newcommand{\frakg}{\mathfrak{g}}

% OPERATORS
%% Standard operators

\DeclareMathOperator{\id}{id}
\DeclareMathOperator{\inc}{inc}
\DeclareMathOperator{\pr}{pr}
\DeclareMathOperator{\rank}{rank}
\DeclareMathOperator{\spn}{span}
\DeclareMathOperator{\image}{image}
\DeclareMathOperator{\tr}{tr}

%% Geometry

\DeclareMathOperator{\Hol}{Hol}

%% Lie theory
\DeclareMathOperator{\Ad}{Ad}

\DeclareMathOperator{\GL}{GL}

\DeclareMathOperator{\SO}{SO}

%% Particular operators

\DeclareMathOperator{\gr}{gr}

\DeclareMathOperator{\Gr}{Gr}

\DeclareMathOperator{\Lie}{Lie}
\newcommand{\Jac}{\rm Jac}
\newcommand{\stp}{{\bf stp}}

\DeclareMathOperator{\Iso}{Iso}
\DeclareMathOperator{\iso}{iso}

\DeclareMathOperator{\blambda}{\boldsymbol{\lambda}}

\newcommand{\Lbra}{[ \![}
\newcommand{\Rbra}{] \!]}

\newcommand{\ve}{\varepsilon}

\numberwithin{equation}{section}

\title[Effective normalization of sub-Riemannian connections]{Effective normalization of sub-Riemannian connections}
\author[E.~Grong, J.~Slovak]{Erlend Grong and Jan Slovak}

\address{University of Bergen, Department of Mathematics, P.O.~Box 7803, 5020 Bergen, Norway}
\email{erlend.grong@uib.no}

\address{Department of Mathematics and Statistics, Faculty of Science, Masaryk University,
Kotlářská 267/2, 611 37 Brno, Czech Republic}
\email{slovak@muni.cz}

\subjclass[2010]{53C17, 17B70, 58A15}

\keywords{Cartan connection, sub-Riemannian manifolds of constant symbol, graded manifolds, compatible affine connections, normalization condition}

\thanks{The first author is supported by the grant GeoProCo from the Trond Mohn Foundation - Grant TMS2021STG02 (GeoProCo). The second author acknowledges the support of the Czech Grant Agency project GA24-10887S, as well as the Horizon Europe MSCA projects CaLIGOLA and CaLiForNIA, ID 101086123 and ID 101119552}

\begin{document}

\begin{abstract}
We give a new normalization condition for connections on sub-Riemannian manifolds with constant symbols. The condition is formulated in terms of Cartan connections and depends only on the first degree of homogeneity of the curvature.
The essential part of our result is to show how a Cartan connection can be uniquely determined by a partial connection on the horizontal bundle. Viewed from the manifold, this observation is equivalent to the following claim: a compatible partial affine connection can be uniquely extended to both a full affine connection and a grading of the tangent bundle, and our normalization ensures that the holonomy of this connection will coincide with the horizontal holonomy, i.e., related to horizontal paths only. We give several examples in which we compute the canonical connections for a class of sub-Riemannian manifolds.
\end{abstract}

\maketitle
%Version: \today

%\tableofcontents

\section{Introduction}
Motivated by the equivalence problem for sub-Riemannian manifolds, we want to give a new normalization condition for connections.
Unlike what is the case for Riemannian manifolds and the Levi-Civita connection, there is no condition as simple as torsion freeness to use for normalization.
The first attempts of canonical connections were first provided for contact manifolds (see, e.g., \cite{Tan89}), before a general condition was given by Morimoto~\cite{Mor93,Mor08} using Cartan geometry, which corresponds to giving
a canonical condition for gradings on tangent bundles and compatible affine connections \cite{Gro20,Gro22}. These concern sub-Riemannian manifolds with constant symbols, i.e., spaces that have a constant flat space of reference that corresponds to the infinitesimal geometry in terms of the metric tangent cones, see \cite{Bel96} for precise statement.

Even though the condition from Morimoto is canonical, it is not necessarily simple to compute, and we seek to develop a different normalization condition in this paper. As movement in a sub-Riemannian manifold is restricted to a subbundle $E$ called \emph{the horizontal bundle}, a natural restriction would be to require that our connection is determined by parallel transport only in these distinguished directions. In \cite{FGR97,CGJK19}, holonomy of partial connections was treated, together with an explicit restriction of how connections can be extended while preserving the same holonomy group. Similar ideas are behind Rumin's differential complexes \cite{Rum90,Rum94,Rum99} where a version of the de Rham cohomology can be constructed from one-forms only defined on the horizontal bundles. A generalized framework in \cite{GrTr23} based on these ideas allows us to adapt these ideas to Cartan geometry. In the end, we provide a new normalization condition with the following proved benefits:
\begin{enumerate}[$\bullet$]
\item We introduce an extension condition so that we get a grading of the tangent bundle $TM$ and an affine connection $\nabla$ on $TM$, which are completely generated by the values $\nabla_{|E}|_E$ of the connection on the horizontal bundle.
\item We give a normalization condition for the curvature which only involves the part that has homogeneity degree~1, independent of the step of the horizontal bundle. In particular, this normalization condition determines $\nabla_{|E}|_E$, which in turn determines the grading and full connection.
\item Because of our extension condition, holonomy along all loops $\Hol_x(\nabla)$ and horizontal holonomy $\Hol_{E,x}(\nabla)$ along only loops tangent to $E$ will always coincide. In particular, if parallel transport along horizontal loops is trivial, then it will be trivial for all loops.
\end{enumerate}

Our result can be formulated in terms of Cartan geometry as follows. Let $(M, E,g)$ be a sub-Riemannian manifold with constant symbol being the Carnot algebra $\frakg_- =  \frakg_{-\stp} \oplus \frakg_{-\stp+1}\oplus\cdots \oplus \frakg_{-1}$, and the number $\stp$ is called the step of the manifold, see Section~\ref{sec:CarnotAlgebra} for definition of Carnot algebras and their isometries, and Section~\ref{sec:nonholonomic} for definition of sub-Riemannian manifolds with constant symbol. Let $G_0$ be the isometry group of $\frakg_-$, i.e., graded Lie algebra automorphisms restricting as isometries to $\frakg_{-1}$, and let its Lie algebra be $\frakg_0$. Define $\frakg = \frakg_- \oplus \frakg_0$ as a graded Lie algebra with its usual brackets. We can always use Lie brackets along with the inner product on $\frakg_{-1}$ to give an inner product on $\frakg_{-}$ and $\frakg$. Since the manifold has constant symbol, we can construct a principal bundle $G_0 \to \scrF \to M$ of nonholonomic frames, see again Section~\ref{sec:nonholonomic} for the definition. Cartan connections on $\scrF$ are $\frakg$-valued, $G_0$-equivariant one-forms $\psi$ (i.e., $r_g^*\psi = \operatorname{Ad}(g^{-1})\circ\psi$, where $r_g$ is the principal action of  $g\in\frakg_0$), recovering the fundamental vector fields (i.e., $\psi(\frac\partial{\partial t}_{|t=0}u\cdot\operatorname{exp}(ts)) = s$ for all $s\in\frakg_0$), whose restrictions to each of the tangent spaces are linear isomorphisms. The curvature $K = d\psi + \frac{1}{2} [\psi,\psi]$ of a Cartan connection $\psi$ gives us the function $\kappa \in C^\infty(\scrF, \frakg \otimes \wedge^2 \frakg_-^*)^{G_0}$, $\kappa(\psi(\cdot ), \psi(\cdot)) = K$,  where we impose our normalization condition. Each Cartan connection induces a filtration on $TM$ and Lie algebra structures on the individual graded tangent bundles. The Cartan connection is called adapted (or regular), if these two objects coincide with the standard ones on the filtered manifold $M$.  

On the complex $\frakC$ given by $\frakC^k =\frakg \otimes \wedge^k \frakg_-^*$, we introduce the Spencer differential $\partial$ and the base differential $\partial_b$ defined as the Lie algebra cohomology differential using respectively the adjoint representation and the trivial representation of~$\frakg_-$ on~$\frakg$. Let $\partial_b^{-1}$ be the pseudo-inverse defined using the inner product on $\frakC$. 
We then have the following result.

\begin{theorem} \label{th:main}
There is a unique adapted Cartan connection $\psi: T\scrF \to \mathfrak{g}$ with curvature $\kappa \in C^\infty(\scrF, \frakC^2)^{G_0}$ such that
$$\text{(Extension condition)} \qquad \partial_b^{-1} \kappa =0,$$
and its homogeneity degree one component $\kappa_1$ satisfies
$$\text{(Normalization condition)} \qquad \partial^* \kappa_1 = \partial^*\partial_b^{-1} \partial_b\kappa_1.$$
\end{theorem}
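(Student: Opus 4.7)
The plan is to parametrize adapted Cartan connections around a reference one and reduce the theorem to an inductive algebraic construction. First, I would fix any adapted Cartan connection $\psi_0$ on $\scrF$, which exists by \cite{Mor93} (and equivalently by \cite{Gro20,Gro22}); every other adapted Cartan connection can be written uniquely as $\psi = \psi_0 + \Phi$, where $\Phi$ is a $G_0$-equivariant, horizontal, filtration-raising $\frakg$-valued one-form on $\scrF$. Trivializing $T\scrF$ by $\psi_0$, such $\Phi$ corresponds to a function $\phi \in C^\infty(\scrF, \bigoplus_{h \geq 1}\frakC^1_h)^{G_0}$, which decomposes by homogeneity as $\phi = \phi_1 + \phi_2 + \cdots$. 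Expanding the structure equation $\kappa = d\psi + \tfrac12[\psi,\psi]$ and separating by homogeneity yields
\begin{equation*}
\kappa_h(\psi_0 + \Phi) \;=\; \kappa_h(\psi_0) + \partial\phi_h + R_h(\phi_1, \ldots, \phi_{h-1}),
\end{equation*}
where $R_h$ is a universal polynomial expression in $\kappa(\psi_0)$ and the $\phi_i$ of strictly lower degree.

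I would then proceed by induction on $h$. Since the pseudo-inverse $\partial_b^{-1}$ vanishes precisely on $(\image\partial_b)^\perp = \ker\partial_b^*$, the extension condition $\partial_b^{-1}\kappa = 0$ is equivalent to $\partial_b^*\kappa = 0$. At step $h$, after $\phi_1, \ldots, \phi_{h-1}$ have been fixed, one must choose $\phi_h \in \frakC^1_h$ to enforce the conditions in degree $h$. For $h \geq 2$ the only requirement is $\partial_b^*\kappa_h = 0$, which translates to
\begin{equation*}
\partial_b^*\partial\phi_h \;=\; -\,\partial_b^*\bigl( \kappa_h(\psi_0) + R_h(\phi_1, \ldots, \phi_{h-1}) \bigr);
\end{equation*}
unique solvability at this step amounts to the map $\partial_b^*\partial : \frakC^1_h \to \image\partial_b^* \cap \frakC^1_h$ being a linear bijection for $h \geq 2$. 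For $h = 1$ this bijectivity is expected to fail, leaving residual freedom in $\phi_1$. Once $\partial_b^*\kappa_1 = 0$ is imposed, a direct computation gives $\kappa_1 - \partial_b^{-1}\partial_b\kappa_1 = P\kappa_1$, where $P$ is the orthogonal projection onto the $\partial_b$-harmonic subspace $\calH := \ker\partial_b \cap \ker\partial_b^*$, so the normalization reads $\partial^* P\kappa_1 = 0$ and precisely pins down the remaining freedom in $\phi_1$.

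The main obstacle is establishing the algebraic bijectivity claims: for $h \geq 2$, that $\partial_b^*\partial$ is an isomorphism onto $\image \partial_b^* \cap \frakC^1_h$, and for $h = 1$, that the enriched operator $\phi_1 \mapsto \bigl( \partial_b^* \partial\phi_1,\, \partial^* P\partial\phi_1 \bigr)$ is a bijection onto its natural target. Writing $\partial = \partial_b + \delta$, where $\delta$ is the part of the Spencer differential implementing the adjoint action of $\frakg_-$ on $\frakg$, one sets up a combined Hodge-type decomposition of $\frakC^\bullet$ that intertwines the $\partial_b$-Laplacian with the $\frakg$-grading. The Rumin-type algebraic machinery developed in \cite{GrTr23} in exactly this sub-Riemannian framework should provide the required isomorphism theorems. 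Once bijectivity is in place the induction closes; $G_0$-equivariance is preserved at every step since all operators commute with the $G_0$-action, and one obtains a unique $\phi = \sum_h \phi_h$, hence the unique adapted Cartan connection $\psi = \psi_0 + \Phi$ satisfying both the extension and normalization conditions.
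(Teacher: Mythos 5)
Your overall strategy coincides with the paper's: expand around a reference adapted connection, use $\kappa_h = \partial\phi_h + (\text{terms depending only on lower homogeneities})$ as in Lemma~\ref{lemma:LowerHom}, let the extension condition absorb everything except the degree-one data on $\frakg_{-1}$, and pin that down with the normalization condition via the orthogonal projection onto $\ker\partial_b\cap\ker\partial_b^*$. Your step for $h\geq 2$ is correct and can be closed: for $h\geq2$ every $\phi_h\in\frakC^1_h$ vanishes on $\frakg_{-1}$, so $\partial_b^{-1}\partial\phi_h = \partial_b^{-1}\partial_b\phi_h + \partial_b^{-1}[\id,\phi_h] = \phi_h + N\phi_h$, where $N$ strictly raises the filtration $\frakC_{(j)}$ and is therefore nilpotent; hence $\partial_b^*\partial=\partial_b^{-1}\partial$ is bijective on $\frakC^1_h$. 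This is the linear-algebraic shadow of Lemma~\ref{lemma:CartanExtension}, which performs the whole extension at once by building the horizontal lifts $H_B$ from iterated brackets.

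The genuine gap is at $h=1$. Injectivity of your enriched operator reduces, after the observations $\|\Pi\partial\phi_1\|^2=\langle\Pi\partial\phi_1,\partial\phi_1\rangle=0$ and $\partial\phi_1=P^\infty\Pi\partial\phi_1$, to the statement $\ker\partial\cap\frakC^1_1=0$, i.e.\ the vanishing of the first Tanaka prolongation of $\frakg_-\oplus\frakg_0$. This is \emph{not} supplied by the Rumin-type machinery of \cite{GrTr23}, where you park it; it is a Lie-theoretic property of the isometry algebra of a Carnot algebra due to Morimoto (\cite[Proposition~1]{Mor08}, recorded as Remark~\ref{re:No closed one-forms}), and if $\frakg_0$ were replaced by a larger algebra of derivations with nontrivial prolongation your operator would have a kernel and uniqueness would fail, so this input cannot be bypassed. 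You also leave surjectivity onto the ``natural target'' unexamined; the clean route is the paper's: as $\phi_1$ ranges over the solutions of the extension condition, $\Pi\kappa_1$ ranges over the affine subspace $\Pi\tilde\kappa_1+\Pi\partial\frakC^1_1$, and orthogonality of a point of an affine subspace $v_0+V$ to $V$ always has exactly one solution, so existence is automatic and the Tanaka vanishing is needed only to lift that solution back to a unique $\phi_1$.
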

From the extension condition it follows that the Cartan connection $\psi$ is uniquely determined by its restriction $\psi|_{(\pi_*)^{-1} E}$ and the extension from $\psi|_{(\pi_*)^{-1} E}$ to $\psi$ is made in such a way that both objects have the same holonomy, see Lemma~\ref{lemma:CartanExtension} and Theorem~\ref{th:HolE} for details. Notice that the normalization condition deals only with the component of the curvature of homogeneity degree one, and our condition is clearly equivalent to the condition that $\partial^*\kappa_1$ vanishes on the kernel of the basic differential $\partial_b$. Any adapted Cartan connection can be viewed as an affine connection $\nabla$ on $TM$ and a decomposition $TM=TM_{-1} \oplus \cdots \oplus TM_{-\stp}$ such that $TM_{-1} = E$, $TM_{-j}=E^{-j}/E^{-j+1}$, where $E^{-j} = TM_{-1} \oplus \cdots \oplus TM_{-j}$ is spanned by $E$ and all brackets of $j$ vector fields with values in $E$.

We should emphasize that studying connections on sub-Riemannian manifolds from the point of view of symmetries, as we will do in this paper, will differ quite a lot from studying them from the point of view of geodesics. A natural comparison can be drawn to Riemannian reductive homogeneous spaces where the canonical connection compatible with isometries has torsion if the space is not symmetric, and hence not equal to the Levi-Civita connection, see, e.g., \cite{Agr06,OR12}. In sub-Riemannian geometry the difference is even more stark, as the natural structure for studying isomorphism classes are canonical affine connections and gradings, while the most natural framework for studying geodesics and Jacobi fields are non-linear connections, see, e.g., \cite{LiZe09,ABR18,Gro20b}.

The structure of the paper is as follows. In Section~\ref{sec:Complex} we look at Carnot algebras and complexes of linear forms on these spaces. In Section~\ref{sec:ConnectionSR} we look at sub-Riemannian manifolds of constant symbol and connections that are compatible with all of these structures in that they preserve them under parallel transport. We will present this theory both from the point of view of Cartan geometry and in terms of gradings of the tangent bundle and affine connections. In Section~\ref{sec:ExtendPartial} we go into more details of how a partial connection can be extended in a unique way. Finally, we will prove Theorem~\ref{th:main} and go into more details on the normalization condition in Section~\ref{sec:Normalization}. The remaining parts of the paper consist of worked-out examples. Section~\ref{sec:Small} gives an explicit description of how a connection can be computed when the growth vector of the horizontal bundle is $(2,3)$ or $(2,3,5)$. In Section~\ref{sec:Contact} we go into details on contact manifolds, while step two sub-Riemannian manifolds with the maximal growth vector $(n_1, n_1(n_1+1)/2)$ are treated in Section~\ref{sec:MaxGrowth}.

\subsection{Notation and conventions} \label{sec:Notation}
We will use the following notation for the rest of the paper. If $\ell: V \to W$ is a linear map between two (finite dimensional) inner product spaces, we will use $\ell^*: W \to V$ for its adjoint with respect to the inner products, which will appear most frequently in our case. For the rare cases when we actually need the dual map, we will use $\ell^\vee: W^* \to V^*$ to distinguish them. We will also often use the notation $\ell^{-1}:W \to V$ for the pseudo-inverse, defined so that $\ell^{-1}$ vanishes on $(\image \ell)^\perp$, while being equal to the inverse of $\ell |_{(\ker \ell)^{\perp}}$ on $\image \ell$. We note that $\ell^{-1}$ and $\ell^*$ coincide when $\ell$ is an isometry from $(\ker \ell)^\perp$ to $\image \ell$, but even when this is not the case, the two maps have the same image and kernel. Furthermore, if $\ell$ is a surjective map, then the definition $\ell^{-1}$ does not depend on the inner product of $W$. Hence, if $\ell: V \to W$ is a surjective map from an inner product space $V$ to a vector space $W$, then $\ell^{-1}$ is still well defined and we can introduce an inner product on $W$ by
$$\langle w_1, w_2 \rangle = \langle \ell^{-1} w_1, \ell^{-1} w_2 \rangle, \qquad w_1, w_2 \in W.$$
We call this inner product on $W$ \emph{induced} by $\ell$.

If $v \in V$ is an element in an inner product space, we will use $v^* = \langle v, \cdot \rangle \in V^*$. If $v_1, \dots, v_n$ is any basis for a vector space, with or without an inner product, we denote $v_1^*, \dots, v_n^*$ for the dual basis given by $v_i^*(v_j) = \delta_{ij}$.

\section{Complexes of Carnot algebras} \label{sec:Complex}
\subsection{Carnot algebras and isometries} \label{sec:CarnotAlgebra}
Let $\mathfrak{g}_- = \mathfrak{g}_{-\stp} \oplus \cdots \oplus \mathfrak{g}_{-1}$ be a negatively graded Lie algebra. We assume that the Lie bracket is homogeneous of degree zero, implying in particular that $\mathfrak{g}_-$ is nilpotent. We say that the Lie algebra $\mathfrak{g}_-$ is \emph{stratified} if $[\mathfrak{g}_{-1}, \mathfrak{g}_{-j}] = \mathfrak{g}_{-j-1}$ for any $1 \leq j < \stp$. \emph{A Carnot algebra} is a negatively graded, stratified Lie algebra $\mathfrak{g}_-$ with a chosen inner product $\langle \cdot , \cdot \rangle$ on~$\mathfrak{g}_{-1}$.  \emph{Isometries} of a Carnot algebra $\mathfrak{g}_-$ to itself are graded Lie algebra automorphisms that will map $\mathfrak{g}_{-1}$ to itself isometrically. The set of all isometries $G_0$ forms a Lie group~\cite{LDO16}. Let $\mathfrak{g}_0$ be its Lie algebra and define $\mathfrak{g} = \mathfrak{g}_- \oplus \frakg_0$ with the induced brackets,
$$[A_1 +s_1,  A_2 + s_2] =  [A_1,A_2] + s_1(A_2) - s_2(A_1) +s_1 s_2 - s_2 s_1,$$
with $s_1, s_2 \in \mathfrak{g}_0$, $A_1, A_2 \in \mathfrak{g}_{-}$. We note that $\mathfrak{g}_0$ consists of inner Lie algebra derivations that are skew-symmetric when restricted to $\mathfrak{g}_{-1}$.

The inner product on $\mathfrak{g}_{-1}$ also allow us to define a product on $\mathfrak{g}$ as follows. By the definition of a Carnot algebra, the map $A \wedge B \mapsto [A,B]$ is a surjective map from elements in $(\wedge^2 \frakg_-)_{-j}$ of degree $-j$ to $\frakg_{-j}$ (of course we consider $\frakg_{-j}=0$ for all $j>\stp$). We can introduce an inner product on $(\wedge^2 \frakg_{-})_{-2} = \wedge^2 \frakg_{-1}$ by using one half of the inner product induced on $\frakg_{-1} \otimes \frakg_{-1}$ so that the wedge of two unit orthogonal elements in $\frakg_-$ have length 1. We can then define an inner product on $\frakg_{-2}$ induced by $[\cdot, \cdot]$ as described in Section~\ref{sec:Notation}. Repeating this argument, we obtain an inner product for the whole of $\frakg_-$. We can further give $\frakg_0$ an inner product by considering it as a subspace of $\frakg_{-} \otimes \frakg^*_-$.

\subsection{Complexes of linear forms}
Let $\frakC^k = \frakg \otimes \wedge^k \mathfrak{g}_{-}^* $ be linear forms on $\mathfrak{g}_-$ to~$\mathfrak{g}$, inheriting the grading from $\mathfrak{g}$ in the usual way.
We will consider three types of operations on this complex.
\begin{enumerate}[$\bullet$]
\item \emph{Wedge products:} If $\alpha \in \frakC^k$ and $\beta \in \wedge^j \frakg_-^*$, then we can define $\alpha \wedge \beta \in \frakC^{k+j}$, by
$$\alpha \wedge \beta = \sum_{l=1}^r A_l \otimes (\alpha_l \wedge \beta), \qquad \alpha = \sum_{l=1}^r A_l \otimes \alpha_l \qquad A_l \in \frakg , \alpha_l \in \wedge^k \frakg_-^*.$$
In particular, since $\frakg = \frakC^0$, we can write $\alpha =  \sum_{l=1}^r A_l \otimes \alpha_l = \sum_{l=1}^r A_l \wedge \alpha_l$.
\item \emph{Lie brackets:} If $\alpha \in \frakC^k$ and $\beta \in \frakC^j$, then we can define $[\alpha,\beta] \in \frakC^{k+j}$ such that if $\alpha = \sum_{l=1}^r A_l \wedge \alpha_l$ and $\beta = \sum_{m=1}^q B_m \wedge \beta_m$, with $A_l, B_m \in \frakC^0$, $\alpha_l \in \wedge^k \frakg_-^*$, $\beta_m\wedge^j \frakg_-^*$, then
$$[\alpha,\beta] = \sum_{l=1}^r\sum_{m=1}^q [A_l,B_m] \wedge (\alpha_l \wedge \beta_m)$$
Remark that if $\alpha \in \frakC^1$, then $[\alpha,\alpha](A,B) = 2[\alpha(A),\alpha(B)]$ for any $A, B \in \frakg_-$.
\item \emph{Differentials:} Let $\xi$ be a representation of $\frakg_-$ on $\frakg$ such that the map $(A,B) \mapsto \xi(A)B$ has degree zero. The corresponding Lie algebra differential is given by
\begin{align*}
(\partial_\xi \alpha)(A_0, \dots, A_k) & =\sum_{i=0}^k (-1)^i \xi(A_i) \alpha(A_0, \dots, \hat A_i, \dots, A_k) \\
& \qquad + \sum_{i<j} (-1)^{i+j} \alpha([A_i, A_j], A_0, \dots, \hat A_i, \dots, \hat A_j, \dots, A_k),
\end{align*}
which makes $\partial_\xi$ a degree zero operator that satisfies $\partial_\xi^2 = 0$, making $(\frakC,\partial_\xi)$ a graded differential complex. We will consider two differentials in particular, \emph{the Spencer differential} $\partial$ and \emph{the base differential} $\partial_b$ corresponding to respective choices of $\xi$ being equal to the adjoint and the trivial representations.
\end{enumerate}
Let us give some identities combining these concepts. We can extend the definition $\partial_b$ to $\wedge \frakg_-^*$ using the trivial representation of $\frakg_{-1}$ on $\mathbb{R}$, giving us the identity
$$\partial (\alpha \wedge \beta) = (\partial \alpha) \wedge \beta + (-1)^k \alpha \wedge \partial_b \beta, \qquad \alpha\in \frakC^k, \beta \in \wedge^j \frakg_-^*,$$
and similarly $\partial_b (\alpha \wedge \beta) = (\partial_b \alpha) \wedge \beta + (-1)^k \alpha \wedge (\partial_b \beta)$. Also observe that for $\beta \in \wedge^j \frakg_{-}^*$, $A \in \frakg_-$, $B \in \frakg = \frakC^0$, then
$$\partial B(A) = [A,B], \qquad \partial_b B = 0, \qquad (\partial - \partial_b) (B \wedge \beta) = (\partial B) \otimes \beta.$$
Furthermore, if we consider the identity map $\id = \id_{\frakg_-}$ as an element in $\frakC^1$, then
$$(\partial - \partial_b) \alpha = [\id, \alpha], \qquad \alpha \in \frakC.$$

\begin{remark} \label{re:No closed one-forms}
We will need the following fact for the proof of our main result. Observe that $\frakC_1^1 = \oplus_{i=1}^{\stp} (\frakg_{-i+1} \otimes \frakg_{-i}^*)$ are the degree 1 elements in $\frakC^1$.
From \cite[Proposition~1]{Mor08}, we know that the Tanaka prolongation of $\frakg$ coincides with $\frakg$, meaning in particular that $\frakg^1 = \{ \alpha \in \frakC_1^1 \, : \, \partial \alpha =0\} = \ker \partial \cap \frakC_1^1 =0$, see, e.g., \cite{Zel09} for details.
\end{remark}

\subsection{Filtration of linear forms}
Define $\partial_b^{-1}$ as the pseudo-inverse of $\partial_b$ with respect to the inner product on $\frakC$. Since $\image \partial_b \subseteq \ker \partial_b$, we can get the same relation for $\partial_b^{-1}$ using orthogonal complements and hence $(\partial_b^{-1})^2 =0$.
Using the base differential $\partial_b$ as well as the inner product on $\frakC$, we can define an orthogonal decomposition
$$\frakC = \image(\partial_b) \oplus_\perp \image(\partial_b^*) \oplus_\perp \ker(\Box_b), \qquad \Box_b = \partial_b \partial_b^* + \partial_b^* \partial_b.$$
If $\frakT = (\ker \partial_b)^\perp = \image \partial_b^* = \image \partial_b^{-1}$ and we write $\frakE_0 = \ker(\Box_b) = \ker(\partial_b) \cap \ker(\partial_b^{-1})$, then $\image \partial_b = \partial_b \frakT$, and so we can write
$$\frakC = \frakT \oplus_{\perp} \partial_b \frakT \oplus_\perp \frakE_0.$$
The orthogonal projection to $\frakE_0$ is given by
$$\Pi = \id- \partial_b^{-1} \partial_b - \partial_b \partial_b^{-1}.$$
While $\frakE_0$ is not closed under $\partial$, there is a one-to-one correspondence with a subcomplex $\frakP$ in $\frakC$ defined as follows.

We introduce the following filtration $\frakC_{(0)} \supsetneq \frakC_{(1)}  \supsetneq \cdots \supsetneq \frakC_{(S)} \supsetneq \frakC_{(S+1)} =0$ of finite depth $S$ such that
$$\frakC_{(j)} = \frakg \otimes (\wedge \frakg_{-}^*)_{\geq j},$$
or equivalently
$$\frakC_{(j)} = \{ \alpha \in \frakC\, : \,  \text{$\alpha$ vanishes on elements of degree $\leq -j+1$} \}.$$
The depth $S$ equals the degree of nonzero elements in $\wedge^n \frakg_{-}^*$, $n = \dim \frakg_-$, which is the maximal possible. We remark that by definition $\frakC^j \subseteq \frakC_{(j)}$, $\frakC^j \cap \frakC_{(j \cdot \stp+1)} =0$ and furthermore $\partial \frakC_{(j)} \subseteq \frakC_{(j)}$. Since $$(\partial-\partial_b) \frakC_{(j)} \subseteq \frakC_{(j+1)},$$
we have that $\partial_b$ is a base differential of the filtered complex is the sense of \cite{GrTr23}, giving us the following results. 

Introduce a projection operator $P^\infty$ as follows. Consider
$$P = \id - \partial_b^{-1} \partial - \partial \partial_b^{-1}, $$
notice $P^{S+1} = P^S$ and write $P^\infty =P^S$. 
If 
$$\frakP = \image P^\infty = \ker \partial_b^{-1} \cap \ker \partial_b^{-1} \partial = \{ P(\alpha) = \alpha \, : \,\alpha \in \frakC \}$$
then
\begin{equation} \label{CDecomp} \frakC = \frakT \oplus \partial \frakT \oplus \frakP = \frakT \oplus \partial_b \frakT \oplus \frakP, \qquad \text{with $\ker P^\infty = \frakT \oplus \partial \frakT$} .\end{equation}
Furthermore, $\partial P^\infty = P^\infty \partial$ and $P^{\infty}$ is an isomorphism from $\frakC$ to $\frakP$ on the level of cohomology. Finally, if $\alpha \in \frakC^k$, and $S_k \leq k\cdot s$ is the maximal number such that $\frakC^k \cap \frakC_{(S_k)}$ is nonzero, then $P^\infty(\alpha) = P^{S_k-k}(\alpha)$.

The decompositions in \eqref{CDecomp} are not orthogonal in general, however, by the proof of \cite[Proposition 2.9]{GrTr23},
$$\text{$P^\infty \circ \Pi = P^\infty$ and $\Pi \circ P^\infty = \Pi$},$$
giving us a one-to-one correspondence between $\frakE_0$ and $\frakP$. Explicitly, the correspondence is given in the following way.
\begin{lemma} \label{lemma:Pinfty}
Let $\alpha \in \frakC$ and $\beta = P^\infty \alpha$. Then $\beta$ is the unique solution to
$$\Pi \alpha= \Pi\beta, \qquad \partial^{-1}_b \partial \beta =0, \qquad \partial^{-1}_b \beta =0.$$
In particular, if $\alpha \in \frakC^1$, then $\beta$ is the unique solution to
$$\alpha|_{\frakg_{-1}} = \beta|_{\frakg_{-1}}, \qquad \partial^{-1}_b \partial \beta =0.$$
\end{lemma}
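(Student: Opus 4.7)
The plan is to verify the three conditions directly and then use the bijection between $\frakP$ and $\frakE_0$ induced by $\Pi$ and $P^\infty$ to get uniqueness. For existence, $\beta = P^\infty\alpha$ lies in $\frakP = \ker\partial_b^{-1} \cap \ker(\partial_b^{-1}\partial)$ by the very definition of $\frakP$, which immediately gives the last two conditions $\partial_b^{-1}\beta = 0$ and $\partial_b^{-1}\partial\beta = 0$. The condition $\Pi\alpha = \Pi\beta$ is then the identity $\Pi \circ P^\infty = \Pi$ recorded above the lemma.

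The main step for uniqueness is the observation that the two identities $\Pi \circ P^\infty = \Pi$ and $P^\infty \circ \Pi = P^\infty$ force the restrictions $\Pi\colon \frakP \to \frakE_0$ and $P^\infty\colon \frakE_0 \to \frakP$ to be mutually inverse bijections: for $x \in \frakE_0$ one has $\Pi(P^\infty x) = \Pi x = x$, and for $y \in \frakP$ one has $P^\infty(\Pi y) = P^\infty y = y$. Consequently, if $\gamma$ satisfies $\partial_b^{-1}\gamma = 0$, $\partial_b^{-1}\partial\gamma = 0$ and $\Pi\gamma = 0$, then the first two place $\gamma$ in $\frakP$ and injectivity of $\Pi|_\frakP$ forces $\gamma = 0$. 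Uniqueness follows by applying this to the difference $\gamma = \beta - \beta'$ of two solutions. This bijectivity is the only step where any real care is needed; the rest of the argument is routine translation between the orthogonal decompositions of $\frakC$.

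For the special case $\alpha \in \frakC^1$ I would exploit the degree-zero collapse $\partial_b B = 0$ for every $B \in \frakg = \frakC^0$, so $\partial_b\colon \frakC^0 \to \frakC^1$ is the zero map, and hence its pseudo-inverse $\partial_b^{-1}\colon \frakC^1 \to \frakC^0$ vanishes identically. Thus the condition $\partial_b^{-1}\beta = 0$ is automatic on $\frakC^1$ and can be dropped. With $\partial_b^{-1}$ trivial on $\frakC^1$, the projector $\Pi = \id - \partial_b^{-1}\partial_b - \partial_b\partial_b^{-1}$ reduces on $\frakC^1$ to the orthogonal projection onto $\ker\partial_b \cap \frakC^1$. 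Using $(\partial_b\eta)(A,B) = -\eta([A,B])$ together with the stratification identity $[\frakg_-,\frakg_-] = \frakg_{-2}\oplus\cdots\oplus\frakg_{-\stp}$, this kernel consists of exactly those $\eta \in \frakC^1$ that are supported on $\frakg_{-1}$, and its orthogonal complement is the space of $\eta$ vanishing on $\frakg_{-1}$. Consequently $\Pi\alpha = \Pi\beta$ is equivalent to $\alpha|_{\frakg_{-1}} = \beta|_{\frakg_{-1}}$, and the general statement specializes as claimed.
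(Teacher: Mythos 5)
Your proof is correct and follows essentially the same route as the paper: existence via $\beta\in\frakP$ and the identity $\Pi\circ P^\infty=\Pi$, and uniqueness via $\gamma=P^\infty\gamma=P^\infty\Pi\gamma=0$ for a difference $\gamma\in\frakP$ with $\Pi\gamma=0$ (your "injectivity of $\Pi|_{\frakP}$" is the same computation). Your explicit treatment of the $\frakC^1$ special case, using $\partial_b^{-1}|_{\frakC^1}=0$ and $\ker\partial_b\cap\frakC^1=\{\eta:\eta|_{\frakg_{\le-2}}=0\}$, is a correct fleshing-out of what the paper leaves implicit (cf.\ Remark~\ref{re:Boxb}).
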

\begin{proof}
We first note that $\Pi\beta = \Pi P^\infty \alpha = \Pi \alpha$. Furthermore, since $\beta \in \frakP$, we have $\partial_b^{-1} \partial \beta =0$ and $\partial_b^{-1} \beta =0$. To show that these equations uniquely determine~$\beta$, let~$\beta_2$ be a second solution. Then $\gamma = \beta - \beta_2 \in \frakP$, with $\gamma \in \frakP$ and $\Pi \gamma =0$. However, then
$$0  = P^\infty 0 = P^\infty\Pi \gamma = P^{\infty}\gamma = \gamma,$$
giving us uniqueness.
\end{proof}

\begin{remark} \label{re:Boxb}
\begin{enumerate}[\rm (a)]
\item We observe that by the definition of the Carnot algebra, the image of the Lie bracket is $\mathfrak{g}_{\leq -2}$. It follows that
$$\partial^*_b \frakC^2 = \partial_b^{-1} \frakC^2 = \{ \alpha \in \frakC^1 \, : \, \alpha |_{\frakg_{-1}} =0 \}.$$
Since $\image(\partial_b) \cap \frakC^1 =0$, we have that $\ker(\Box_b) \cap \frakC^1 = \frakg_{-1}^* \otimes \mathfrak{g}$ .
Furthermore, $\ker(\Box_b) \cap \frakC^1 = \ker(\partial_b) \cap \frakC^1$, so $\ker(\partial_b) \cap \frakC^1  = \{ \alpha \in \frakC^1 \, : \, \alpha |_{\frakg_{\leq -2 }} =0 \}$.
\item From the way we defined the inner product on $\frakg$ in Section~\ref{sec:CarnotAlgebra}, for any $\alpha_1, \alpha_2 \in \frakg \otimes \frakg_{\leq -2}$,
$$\langle \partial_b \alpha_1, \partial_b \alpha_2 \rangle = \langle \alpha_1, \alpha_2 \rangle.$$
Hence, for $\beta \in \frakC^2$, we have $\partial_b^{-1} \beta = \partial_b^* \beta$.
\end{enumerate}
\end{remark}

\section{Connections on sub-Riemannian manifold with constant symbol} \label{sec:ConnectionSR}

\subsection{The nonholonomic frame bundle} \label{sec:nonholonomic}
A sub-Riemannian manifold is a triple $(M,E,g)$, where $E$ is a subbundle of the tangent bundle $TM$ of a connected manifold $M$ and $g= \langle \cdot , \cdot \rangle_g$ is a metric tensor on $E$. We will assume that $E$ is a \emph{bracket-generating, equiregular subbundle}, meaning that there is a flag of subbundles
\begin{equation} \label{flag} 0 = E^0 \subsetneq E= E^{-1}  \subsetneq E^{-2} \subsetneq \cdots \subsetneq E^{-\stp} = TM,\end{equation}
where
$$E^{-(j+1)}= \spn \{ X, [X,Y] \, : \, X \in \Gamma(E^{-j}) , Y \in \Gamma(E)\}.$$
We write $n_j = \rank E^{-j} - \rank E^{-j+1}$ and let $(n_1, n_1 +n_2, \dots, n_1+ \dots + n_{\stp})$ be \emph{the growth vector} of $E$.
\emph{The nonholonomic tangent bundle $\gr_- = \gr_-(M,E)$} of $(M,E)$ is defined by
$$\gr_{-} = \gr_{-\stp} \oplus \cdots \oplus \gr_{-1}, \qquad \gr_{-j} = E^{-j}/E^{-j+1}.$$
We note that if we define a bracket $\Lbra \cdot , \cdot \Rbra$ on $\gr_-$ by
$$\Lbra X +E^{-i},  Y +E^{-j} \Rbra = [X,Y] + E^{-(i+j)+1}, \qquad X \in \Gamma(E^{-i-1}), Y \in \Gamma(E^{-j-1}),$$
then this bracket is tensorial. It follows that $(\gr_{x,-}, \Lbra \cdot , \cdot \Rbra)$ is a Lie algebra for any $x \in M$, which by definition is nilpotent and stratified. If we define the inner product $\langle \cdot, \cdot \rangle_{g(x)}$ on $\gr_{x,-1} =E_x$, then $\gr_{x,-}$ has the structure of a Carnot algebra. We say that $(M,E,g)$ has \emph{a constant symbol} $\mathfrak{g}_-$ if each $\gr_{x,-}$, $x\in M$, is \emph{isometric} to the same Carnot algebra $\mathfrak{g}_-$. For the remainder of this paper, we will only consider sub-Riemannian manifolds with constant symbol. We emphasize that all Carnot algebras $\gr_{x,-}$, have to be isometric, not merely isomorphic as Lie algebras.

For a sub-Riemannian manifold $(M,E,g)$ with constant symbol $\mathfrak{g}_-$, we say that \emph{a nonholonomic orthonormal frame} at $x \in M$ is a Carnot algebra isometry $u: \mathfrak{g}_{-} \to \gr_{x,-}$. We write $\scrF_x$ for the set of all such frames at $x$. If $G_0$ is the isometry group of $\mathfrak{g}_-$, then there is the $G_0$-principal bundle
$$G_0 \to \scrF \stackrel{\pi}{\to} M, $$
with fiber $\scrF_x$ over $x \in M$ and where $g_0 \in G_0$ acts on $u \in \scrF_x$ by $u \cdot g_0 = u \circ g_0$.

\begin{remark}
For a general growth vector, there mostly exist many non-isomorphic Carnot algebras, so the restriction of having a constant symbol is usually quite serious. 
Any free nilpotent algebra of a given step $\stp$ is an exception. This is the truncated tensor algebra up to step $\stp$ divided out by the skew-symmetry and Jacobi identity of the Lie bracket. It is the maximal growth vector and there is a unique Carnot algebra with this property. Thus, all sub-Riemannian manifolds with maximal growth vectors will have constant symbols.
\end{remark}

\subsection{Cartan connections on the nonholonomic bundle} 
Let $\GL(\mathfrak{g})$ be invertible linear maps of $\mathfrak{g}$ and write $\hat \frakC^k = \frakg \otimes \wedge^k \mathfrak{g}^* $. We define the action $\rho(\ell)$ of $\ell \in \GL(\mathfrak{g})$ on $\alpha \in \hat \frakC^k$ by
$$\rho(\ell)\alpha(A_1, \dots, A_k) = \ell \alpha(\ell^{-1} A_1, \dots, \ell^{-1}A_k), \qquad A_1, \dots, A_k \in \mathfrak{g}.$$
If $\alpha \in \frakC^k$, then we can consider it as an element in $\hat \frakC^k$ by requiring it to vanish on~$\mathfrak{g}_0$. Furthermore, if $\ell(\mathfrak{g}_0) = \mathfrak{g}_0$, then $\rho(\ell)\alpha$ is well defined as an element in $\frakC^k$.

Let $(M, E,g)$ be a sub-Riemannian manifold with constant symbol $\mathfrak{g}_-$. Let $\mathfrak{g}_0$ be the isometry algebra of $\mathfrak{g}_{-}$ and write $\mathfrak{g} = \mathfrak{g}_{-} \oplus \frakg_0$. Let $\psi: T\scrF \to \mathfrak{g}$ be a Cartan connection on the non-holonomic frame bundle. Recall that this is a $\frakg$-valued one-form, such that $\psi_u:T_u \scrF \to \frakg$ is always a linear isomorphism, it satisfies equivariance property $\psi(w \cdot g_0) = \Ad(g_0^{-1}) \psi(w)$ and satisfies $\psi(\frac{d}{dt} u \cdot e^{ts}|_{t=0}) = s$ for $u \in \scrF$, $w \in T_u \scrF$, $g_0 \in G_0$, $s \in \frakg_0$.

We write it as $\psi = ( \theta,\omega)$ where $\theta$ and $\omega$ have values in respectively $\mathfrak{g}_-$ and $\mathfrak{g}_0$. Introduce the notation $\hat E^{-j} = \pi_*^{-1} E^{-j}$ and remark that then $\hat E^0 = \ker \pi_*$ is the vertical bundle. Observe then by definition $\hat E^{-k}$ is spanned by $\hat E^{-1}$ and brackets of its section up to $k$ elements. We say that $\psi$ is \emph{adapted} or \emph{regular} if for any $w \in \hat E^{-j-1}_u \subseteq T_u \scrF$, $u \in \scrF$,
$$u^{-1} (\pi_* w +E^{-j}) = \psi(w) \bmod \mathfrak{g}_{\geq -j} = \theta(w) \bmod \mathfrak{g}_{\geq -j}.$$
To understand this condition, let us first consider the Riemannian special case when $\stp =1$. Then $\gr(M,E) = TM$, and being adapted means that for any $w \in T_u\scrF$, $\theta(w)$ equals $\pi_* w$ expressed in the frame $u$. For higher step, $u$ is a nonholonomic frame, and we want the part of $\theta(w)$ of minimal homogeneity to express $\pi_* w+E^{-j}$ in $u$.

As usual, we define the curvature $\kappa = \kappa^\psi \in C^\infty(\scrF, \frakC^2)^{G_0}$ of $\psi$ by
$$K = d\psi + \frac{1}{2} [\psi, \psi], \qquad \kappa(\psi(v),\psi(w)) = \kappa(\theta(v), \theta(w)) = K(v,w).$$
Furthermore, introduce the operator $d^{\psi}: C^\infty(\scrF, \frakC^k)^{G_0} \to C^\infty(\scrF, \frakC^{k+1})^{G_0}$ by
\begin{align*}
d^\psi\alpha(u)(A_0, \dots, A_k) & = \sum_{j=0}^k (-1)^j d\alpha(u)(\psi^{-1} A_j)(A_0, \dots, \hat A_j, \dots, A_k). 
\end{align*}
Observe that the operator $d^\psi$ has only terms of positive homogeneity. Hence, $(d^{\psi}\alpha)_k$ will only depend on the lower homogeneity components $\alpha_i$, $i<k$.

\begin{lemma} \label{lemma:LowerHom}
Let $\psi$ and $\tilde \psi$ be two adapted connections with respective curvatures $\kappa$ and $\tilde \kappa$. Then their curvatures have only terms of strictly positive homogeneity.

Furthermore, we can write $\psi = (\id +\alpha) \tilde \psi$, where $\alpha\in C^\infty(\scrF, \frakC^1)^{G_0}$ only has terms of strictly positive homogeneity, and
\begin{equation} \label{KappaWithAlpha} \kappa = \rho(\id+\alpha)\left( (\id + \alpha)^{-1} d^{\tilde \psi}\alpha+ \tilde \kappa - \frac{1}{2}[\id, \id]\right) + \frac{1}{2}[\id ,\id].\end{equation}
In particular, there are terms $\ve_k = \ve_k(\alpha_1, \dots, \alpha_{k-1},(d^{\tilde \psi} \alpha)_2, \dots, (d^{\tilde \psi} \alpha)_k)$ only depending on terms of $\alpha$ of lower homogeneities than $k$ such that
\begin{equation} \label{KappaDegree} \kappa_k = \partial \alpha_k + \ve_k.\end{equation}
For the case $k=1$, we have $\kappa_1 = \partial \alpha_1 + \tilde \kappa_1$.
\end{lemma}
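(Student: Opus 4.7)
The plan has three main pieces. For positive homogeneity of $\kappa$ and $\tilde\kappa$, I would expand $K = d\psi + \tfrac{1}{2}[\psi,\psi]$ on $v \in \hat E^{-i-1}$, $w \in \hat E^{-j-1}$: the adapted condition forces $\theta(v)$ and $\theta(w)$ to represent $\pi_*v + E^{-i}$ and $\pi_*w + E^{-j}$ in the frame $u$, so $[\theta(v),\theta(w)]$ at the lowest nonzero degree reproduces the nonholonomic bracket $\Lbra\pi_*v, \pi_*w\Rbra$. On the other hand, $d\theta(v,w) = v(\theta(w)) - w(\theta(v)) - \theta([v,w])$ subtracts exactly $\pi_*[v,w]$ read in $u$, which modulo higher degrees agrees with $\Lbra\pi_*v, \pi_*w\Rbra$. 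Since $\kappa_0$ is necessarily $\frakg_-$-valued (its output degree equals the sum of the input degrees, which is negative), the cancellation gives $\kappa_0 = 0$.

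Next, I would construct $\alpha$. Both $\psi$ and $\tilde\psi$ reproduce fundamental vector fields, so $\psi - \tilde\psi$ vanishes on vertical vectors and hence factors through $\tilde\psi$: there is a smooth $\alpha: \scrF \to \frakg \otimes \frakg_-^*$ with $\psi = (\id + \alpha)\tilde\psi$. Equivariance of both Cartan connections yields $G_0$-equivariance of $\alpha$, and the adapted condition for both forces $\alpha$ to strictly raise homogeneity, so $\alpha \in C^\infty(\scrF, \frakC^1)^{G_0}$ has only positive-homogeneity components and $\id + \alpha$ is invertible via a finite Neumann series. Substituting $\psi = (\id+\alpha)\tilde\psi$ into $K = d\psi + \tfrac{1}{2}[\psi,\psi]$, using the structure equation $d\tilde\psi + \tfrac{1}{2}[\tilde\psi,\tilde\psi] = \tilde K$, and evaluating at $(v,w)$ with $A = \tilde\psi(v), B = \tilde\psi(w)$ gives
\[
\kappa((\id+\alpha)A,(\id+\alpha)B) = d^{\tilde\psi}\alpha(A,B) + (\id+\alpha)\tilde\kappa(A,B) - (\id+\alpha)[A,B] + [(\id+\alpha)A, (\id+\alpha)B],
\]
which rewrites as \eqref{KappaWithAlpha} once one recognizes $\rho(\id+\alpha)$ and notes that $\tfrac{1}{2}[\id,\id](A,B) = [A,B]$.

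For the homogeneity decomposition \eqref{KappaDegree}, I would expand $(\id+\alpha)^{-1} = \sum_{j \geq 0}(-\alpha)^j$ and read off degree $k$ in the formula above. Since $d^{\tilde\psi}$ strictly raises homogeneity, $(d^{\tilde\psi}\alpha)_k$ depends only on $\alpha_1,\dots,\alpha_{k-1}$. The term $\rho(\id+\alpha)\tilde\kappa$ contributes $\tilde\kappa_k$ at degree $k$ plus corrections built from $\alpha_i$ and $\tilde\kappa_j$ with $i, j < k$. The remaining piece $\rho(\id+\alpha)(-\tfrac{1}{2}[\id,\id]) + \tfrac{1}{2}[\id,\id]$ is where the Spencer differential surfaces: keeping only the contribution linear in $\alpha_k$ and using the identity $(\partial - \partial_b)\beta = [\id,\beta]$ from Section~\ref{sec:Complex}, a direct calculation produces precisely $\partial\alpha_k(A,B) = [A,\alpha_k(B)] + [\alpha_k(A), B] - \alpha_k([A,B])$; every other contribution involves only $\alpha_i$, $i<k$, and is absorbed into $\ve_k$. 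For $k=1$ there are no lower components of $\alpha$, so $\ve_1$ reduces to $\tilde\kappa_1$ and we recover $\kappa_1 = \partial\alpha_1 + \tilde\kappa_1$.

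The main obstacle is this last step: cleanly isolating $\partial\alpha_k$ inside the Neumann expansion of $\id+\alpha$ and its inverse while organizing the remaining bracket and derivative terms into a single $\ve_k$ manifestly built from lower-degree data only.
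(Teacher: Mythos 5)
Your proposal is correct and follows essentially the same route as the paper: the cancellation argument on the filtration $\hat E^{-j}$ for positive homogeneity of the curvature, the decomposition $\psi=(\id+\alpha)\tilde\psi$ with $\alpha$ of positive degree, substitution into the structure equation to get \eqref{KappaWithAlpha} (your evaluation at $A=\tilde\psi(v)$, $B=\tilde\psi(w)$ is just the change of variables the paper performs via $H_A=\tilde H_{(\id+\alpha)^{-1}A}$), and the identification of the part linear in $\alpha_k$ of $-\tfrac12\rho(\id+\alpha)[\id,\id]+\tfrac12[\id,\id]$ with $\partial\alpha_k$. The final bookkeeping step you flag as the main obstacle is handled in the paper at the same level of detail as in your sketch, so there is no gap to close.
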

Remark that if we define $\alpha^{\circ j} = \alpha \circ \alpha \circ \cdots \circ \alpha$ 
as the result of composing $\alpha$ with itself $j$-times, with the convention that it vanishes at $\mathfrak{g}_0$, then $\alpha^{\circ (\stp + 1)} = 0$ since $\alpha$ only has positive terms. As a consequence
\begin{equation}\label{AlphaInv} (\id+\alpha)^{-1} = \sum_{j=0}^{\stp} (-1)^j \alpha^{\circ j}, \end{equation}
so in particular $\id + \alpha$ is an invertible map and $\rho(\id +\alpha)$ is well-defined.
\begin{proof}
For an element $A \in \mathfrak{g}_-$, let $H_A$ be the vector field on $\scrF$ with $\psi(H_A) = A$. Each adapted connection $\psi$ 
has got the property $\hat E^{-j-1} = \hat E^{-j} \oplus \{ H_A \, : \, A \in \mathfrak{g}_{-j-1}\}$. It follows that if $A \in \mathfrak{g}_{-i}$ and $B \in \mathfrak{g}_{-j}$, then $[H_{A}, H_B]$ takes values in $\hat E^{-i-j}$. Furthermore, since
$$K(H_A, H_B) = - \psi([H_A, H_B] )+ [A,B],$$
and $\psi$ is adapted, we conclude $K(H_A,H_B) = 0 \bmod \frakg_{-i-j+1}$. It follows that the curvature only has terms of positive homogeneity. 
If~$\tilde H_A$ is the vector field that satisfies $\tilde \psi(\tilde H_A) = A$, $A \in \mathfrak{g}_{-j}$, then since both connections are adapted $\psi(\tilde H_A) = (\psi \circ \tilde \psi^{-1})(A) = A \bmod \mathfrak{g}_{-j+1}$, and $\alpha$ can only have positive degrees.

Since $\psi = (\id  + \alpha) \tilde \psi$, we obtain
\begin{equation}\label{eqkappa}
    K  =  d\alpha \wedge \tilde \psi + (\id+\alpha) d\tilde \psi+  \frac{1}{2} [(\id+\alpha) \tilde \psi, (\id + \alpha) \tilde \psi],
\end{equation}    %\\
    %& =  d\alpha \wedge \tilde \psi + (\id+\alpha) d\tilde \psi+  \frac{1}{2} [(\id+\alpha) \tilde \psi, (\id + \alpha) \tilde \psi].
%\end{align*}
and $\kappa(u)(A,B) = K(H_A(u),H_B(u))$. Next, observe 
$H_A = \tilde H_{(\id+\alpha)^{-1}A}$
and compute
\begin{align*}
    (d\alpha \wedge \tilde \psi)&(H_A, H_B) = (H_A\alpha)(\tilde\psi(H_B)) - (H_B\alpha)(\tilde\psi(H_A))\\
    &=(\tilde H_{(\id+\alpha)^{-1}A} \alpha)((\id+\alpha)^{-1} B) - (\tilde H_{(\id+\alpha)^{-1})B} \alpha)((\id+\alpha)^{-1} A) \\
    & = (\rho(\id+\alpha) ((1+\alpha)^{-1} d^{\tilde \psi}\alpha)(A,B).
\end{align*}
Plugging this into \eqref{eqkappa}, and adjusting the remaining terms, we verify \eqref{KappaWithAlpha}.

To obtain \eqref{KappaDegree}, we use \eqref{AlphaInv} to compute
$$\left(-\frac{1}{2} \rho(\id+\alpha)[\id, \id] + \frac{1}{2} [\id, \id]  \right)_k = [\id , \alpha_k] - \frac{1}{2} \alpha_k[\id, \id ] + \text{terms of lower degree},$$
and $[\id , \alpha_k] - \frac{1}{2} \alpha_k[\id, \id ]$ is exactly $\partial \alpha_k$. The final observation is that the degree $k$ component of $\rho(1+\alpha)\left( (1-\alpha)^{-1} d^{\tilde\psi}\alpha+ \tilde\kappa\right)$ and its derivatives only depends on $\alpha_1, \dots, \alpha_{k-1}$ and its derivatives.
\end{proof}

\begin{remark}
The observation in \eqref{KappaDegree} along with the result in Remark~\ref{re:No closed one-forms} is the background for Morimoto's result in \cite{Mor93,Mor08} stating that every sub-Riemannian manifold with constant symbol has an adapted Cartan connection satisfying
$$\partial^*  \kappa = 0.$$
See examples where these connections are explicitly computed in \cite{AMS19,Gro20,Gro22}.
\end{remark}

\begin{remark}
We will give a further explicit presentation of the relation in Lemma~\ref{lemma:LowerHom}. Define $(\id + \alpha)^{-1} = \id + \alpha_{inv}$, where $\alpha_{inv} = \sum_{j=1}^s (-1)^j \alpha^{\circ j}$. Then
\begin{align} \label{secondKappa}
\kappa(A,B) & = - \alpha([A,B])+ (d^{\tilde\psi}\alpha + (\id + \alpha)\tilde \kappa)(A + \alpha_{inv}(A) , B + \alpha_{inv}(B)) \\ \nonumber
& \qquad +(\id+\alpha) \left( - [\alpha_{inv}(A), B]   +[\alpha_{inv}(B),A]  - [ \alpha_{inv}(A) ,  \alpha_{inv}(B)]\right) .
\end{align}
\end{remark}

\begin{remark}
Recall the Bianchi identity
$$dK + [\psi , K] =0.$$
Observe that the Bianchi identity can be rewritten as
$$d^\psi \kappa + \circlearrowright \kappa(\kappa(\cdot, \cdot ), \cdot) + \partial \kappa =0,$$
see \cite[Chapter~1.5]{CaSl09} for details. Here and elsewhere, we use $\circlearrowright$ for the triple cyclic sum,
$$\circlearrowright \kappa(\kappa(\cdot, \cdot ), \cdot)(A,B, C) = \kappa(\kappa(A, B), C) + \kappa (\kappa(C,A), B) + \kappa(\kappa(B, C), A) $$
Looking at this equation in first homogeneity, we obtain the important relation
\begin{equation} \label{kappa1closed} \partial \kappa_1 =0.\end{equation}
\end{remark}

\subsection{Cartan connections seen from the manifold}
We use approach from \cite{Gro20,Gro22}.
Let $(M, E, g)$ be a sub-Riemannian manifold with $E$ equiregular and corresponding growth flag as in \eqref{flag}. \emph{An $E$-grading} is a vector bundle isomorphism $I: TM \to \gr_-$ such that
$$I(E^{-j}) = \gr_{-j} \oplus \cdots \oplus \gr_{-1}.$$
Such an $E$-grading is equivalent to defining a grading $TM_{-1} \oplus TM_{-2} \oplus \cdots \oplus TM_{-\stp}$ such that $E^{-j} = TM_{-1} \oplus \cdots \oplus TM_{-j} = E^{-j+1} \oplus TM_{-j}$, where the correspondence is given by $TM_{-j} = I^{-1} \gr_{-j}$.
Note that on $\gr_{-}$, we have an induced fiber metric $\langle \cdot, \cdot \rangle$ from $\gr_{x,-1}$.  Define the corresponding Riemannian metric $\langle \cdot , \cdot \rangle_I$ where $\langle \cdot , \cdot \rangle_I = \langle I \cdot , I \cdot \rangle $. 
Relative to $I$ and the Lie brackets on $\Lbra \cdot , \cdot \Rbra$ define \emph{the minimal torsion} of~$I$ by
$$T_0(v,w) = - I^{-1} \Lbra I(v), I(w) \Rbra.$$
We say that an affine connection $\nabla$ is \emph{strongly compatible with} $(M, E,g,I)$ if it is compatible with $(E,g)$, each subbundle $TM_{-j} = I^{-1} \gr_{-j}$ is parallel, and $\nabla T_0 =0$. Recall that $\nabla$ is compatible with a sub-Riemannian structure $(E,g)$ if it preserves orthonormal frames of $E$ under parallel transport.

The pairs $(I,\nabla)$ of $E$-gradings with strongly compatible connections are in one-to-one correspondence to adapted Cartan connections on the nonholonomic frame bundle $\psi: T\scrF \to \mathfrak{g}$.
The correspondence is as follows, Write $\psi = (\theta, \omega)$ with~$\theta$ and~$\omega$ taking values in $\mathfrak{g}_-$ and $\mathfrak{g}_0$ respectively. The form $\omega$ is a principal connection on~$\scrF$, inducing a linear connection $\nabla^{\gr}$ on $\gr_-$. If $h_u:T_{\pi(u)} M \to \ker \omega_u$, $u \in \scrF$ denotes the horizontal lift with respect to $\omega$, then
$$I(v) = u(\theta(h_u v)), \qquad v \in T_{\pi(u)} M.$$
We finally have $\nabla = I^{-1}\nabla^{\gr}I$.

The minimal torsion $T_0$ will then be the degree zero component of the torsion $T$ of $\nabla$ with respect to the $E$-grading $TM_{-1} \oplus \cdots \oplus TM_{-\stp}$. As this torsion corresponds to the Lie brackets in $\gr_-$, it cannot vanish. Let $\Gr_0 \subseteq \GL(\gr_-)$ be the principal bundle of fiberwise isometries of $\gr_-$ with its Lie algebra $\gr_0\subseteq \gr_-^* \otimes \gr_-$. We can then obtain corresponding fiberwise isometries and infinitesimal isometries on the tangent space by respectively $\Iso = \Iso(M, E, g, I) = I^{-1} \Gr_0 I$ and $\iso = I^{-1} \gr_0 I$. Observe that while these spaces depend on $I$, their restrictions to $E$ do not. Hence, elements $S \in \Iso$ and $s \in \iso$ are determined by respective properties
\begin{align*}
S|_E &\in \Gr_0|_E, & ST_0(v,w) &= T_0(Sv, Sw), \\
s|_E &\in \gr_0|_E, & sT_0(v,w) &= T_0(sv, w) + T_0(v, sw),
\end{align*}
for any $v, w \in TM$. That $\nabla$ is strongly compatible with respect to $(M,E,g,I)$ is equivalent to the holonomy group satisfying $\Hol_x(\nabla) \in \Iso$ for any $x \in M$.

We observe that the curvature of $\nabla$, $R(v,w) \in \iso_x$, $v,w \in T_xM$, corresponds to $\pr_0 \kappa$. On the other hand $\pr_{-} \kappa$ corresponds to $T- T_0$. More precicely, we have
$$K(h_u v_1, h_u v_2) = (u^{-1} I(T-T_0)(v_1, v_2) , u^{-1}IR(v_1, v_2) I^{-1} u) \in \frakg_- \oplus \frakg_0.$$

\section{Partial connections and canonical extension} \label{sec:ExtendPartial}
\subsection{Partial connection and canonical extensions} \label{sec:PartialConnection}
\emph{A partial (Cartan) connection} $\psi_{E} = ( \theta_{E}, \omega_E)$ over $E$ is a map $\psi_{E}: \pi^{-1}_* E = \hat E^{-1} \to  \mathfrak{g}_{-1} \oplus \frakg_0$ that is an isomorphism on any fiber, equivariant $\psi_{E}|_{u \cdot g_0} = \Ad(g_0^{-1}) \psi_{-1}|_u$, $g_0 \in G_0$, and satisfies
$$\psi_{E}( \tfrac{d}{dt} |_{t=0} u \cdot e^{ts}) = s, \qquad s \in \mathfrak{g}_0.$$
We will show here that there is a canonical way of extending such connection to a full Cartan connection on $\scrF$. Analogous to full Cartan connections, we say that a partial connection $\psi_{E} = ( \theta_E ,\omega_E)$ is \emph{adapted} if
$$\theta(w) = u^{-1} \pi_* w, \qquad w \in \hat E_{u}^{-1}.$$
For two adapted partial connections $\psi$ and $\tilde \psi$, remark that $(\psi - \tilde \psi)(\mathfrak{g}_{-1}) \subseteq \mathfrak{g}_0$.
\begin{lemma} \label{lemma:CartanExtension}
For any partial connection $\psi_{E}$ of $\scrF$, there exists a unique Cartan connection $\psi$ with curvature $\kappa$ on $\scrF$ such that
$$\psi |_{\pi_*^{-1} E} = \psi_{E}, \qquad \partial_b^{-1} \kappa =0.$$
Furthermore, if $\psi_E$ is adapted, then the corresponding full connection $\psi$ is adapted as well.
\end{lemma}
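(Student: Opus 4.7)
The plan is to construct the required $\psi$ as a finite perturbation of any initial extension of $\psi_E$, pinning down the perturbation uniquely via the normalization condition.

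First I would produce an arbitrary Cartan connection $\tilde\psi: T\scrF \to \frakg$ extending $\psi_E$. This is a soft existence: pick any $G_0$-invariant complement $V$ to $\hat E^{-1} = \pi_*^{-1}E$ inside $T\scrF$ and prescribe $\tilde\psi|_V$ as an equivariant fiberwise isomorphism onto $\frakg_{\leq -2}$. If $\psi_E$ is adapted, $V$ can be chosen compatibly with the filtration of $T\scrF$ so that $\tilde\psi$ is adapted as well. Any other Cartan connection extending $\psi_E$ is then of the form $\psi = (\id + \alpha)\tilde\psi$ for a unique $G_0$-equivariant $\alpha \in C^\infty(\scrF, \frakC^1)$, and the requirement $\psi|_{\hat E^{-1}} = \psi_E$ translates exactly into $\alpha|_{\frakg_{-1}} = 0$, which by Remark \ref{re:Boxb}(a) means $\alpha \in \image(\partial_b^{-1}|_{\frakC^2})$.

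Substituting into the curvature formula \eqref{KappaWithAlpha}, using the splitting $\partial = \partial_b + [\id, \cdot]$ and the identity $\partial_b^{-1}\partial_b \alpha = \alpha$ valid on $\image(\partial_b^{-1})$, the normalization $\partial_b^{-1}\kappa = 0$ becomes the fixed-point equation
\begin{equation*}
\alpha + \partial_b^{-1}[\id, \alpha] + \partial_b^{-1}\mathcal{N}(\alpha) = -\partial_b^{-1}\tilde\kappa,
\end{equation*}
where $\mathcal{N}(\alpha)$ collects the contributions of \eqref{KappaWithAlpha} that are at least quadratic in $\alpha$.

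The crucial observation is that $\partial_b^{-1}[\id, \cdot]$ strictly raises the $\wedge\frakg_-^*$-grading: if $\alpha$ has $\frakg_-^*$-grading $i$, then $[\id, \alpha]$ lies in the part of $\frakC^2$ of $\wedge^2\frakg_-^*$-grading $\geq i+1$, and since $\partial_b^{-1}$ preserves this grading (being built only from the degree-zero Lie bracket on $\frakg_-$), the output has $\frakg_-^*$-grading $\geq i+1$. An analogous grading-raising applies to $\mathcal{N}(\alpha)$ via the iterates $\alpha^{\circ j}$, which vanish for $j > \stp$ because $\alpha$ vanishes on $\frakg_{-1}$. Since the $\wedge\frakg_-^*$-grading is bounded by $\stp$, the operator $\partial_b^{-1}[\id, \cdot]$ is nilpotent of finite order on the admissible subspace, so $\id + \partial_b^{-1}[\id, \cdot]$ is invertible by a finite Neumann sum, and the fixed-point equation admits a unique solution $\alpha$. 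If $\tilde\psi$ is adapted, Lemma \ref{lemma:LowerHom} gives $\tilde\kappa$ only positive homogeneity components, so the iteration produces $\alpha$ of strictly positive homogeneity and $\psi = (\id + \alpha)\tilde\psi$ remains adapted.

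The main technical obstacle will be verifying the strict grading-raising property of $\partial_b^{-1}[\id, \cdot]$ on $\image(\partial_b^{-1})$, together with the analogous behavior of $\mathcal{N}$; both rest on the $\wedge\frakg_-^*$-grading being orthogonal with respect to the inner product on $\frakC$ and on $\partial_b$ being grading-preserving, both of which in turn follow from the Lie bracket of $\frakg_-$ being homogeneous of degree zero.
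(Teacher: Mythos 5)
Your proposal is correct and is essentially the paper's own argument: both proceed by induction on the $\frakg_-^*$-filtration degree, using $\partial_b^{-1}$ to determine the extension one level at a time, and your fixed-point equation for $\alpha$ relative to a reference extension $\tilde\psi$ is precisely the explicit recursion \eqref{thirdKappa} that the paper records right after the lemma. One bookkeeping point: $d^{\tilde\psi}\alpha$ is linear (not quadratic) in $\alpha$, so it cannot be absorbed into $\mathcal{N}$ as stated and must be carried alongside $\partial_b^{-1}[\id,\cdot]$ in the filtration-raising part of the iteration --- but the same argument covers it, since for $\alpha$ of $\frakg_-^*$-filtration degree $\geq i$ one has $d^{\tilde\psi}\alpha \in \frakC_{(i+1)}$, so the combined operator is still nilpotent and the successive approximation terminates and is unique as you claim.
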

Observe that we could have also written the curvature condition as $\partial^*_b \kappa =0$.
\begin{proof}
We will build $\psi$ from $\psi_E$ step by step as follows. For any $A \in \frakg_{-1}$, define $H_A$ as the unique vector field on $\scrF$ satisfying $\psi(H_A) = \psi_E(H_A) = A$. Recall that
$$\partial_b(A \wedge \beta) = A \wedge \partial_b \beta, \qquad A \in \frakC^0 = \frakg,\ \beta \in \wedge \frakg_-^*.$$
Our first aim is to choose the correct image of $\psi^{-1}$ applied to $\frakg_{-2}$.
\begin{enumerate}[$\bullet$]
\item Observation~1: Consider the dual $(\partial_b^{-1})^\vee: \wedge \frakg_- \to \wedge \frakg_-$ to $\partial_b^{-1} :  \wedge \frakg_{-}^* \to \wedge \frakg_{-}^*$. Since these mappings have degree zero, the image of $\frakg_{-2}$ under $(\partial_b^{-1})^\vee$ is $(\wedge \frakg_{-})_{-2} = \wedge^2 \frakg_{-1}$.
\item Observation~2: The mapping $\psi_E^{-1}: \frakg_{-1} \to \Gamma(T\scrF)$, $A \mapsto H_A$ extends to a map $\wedge^2 \psi_E: \wedge^2 \frakg_{-1} \to \wedge^2 \Gamma(T\scrF)$.
\end{enumerate}
We combine the observations to get the following sequence of linear mappings of vector spaces
\begin{equation} \label{MappingtoVF}
\frakg_{-2}\overset{-(\partial_b^{-1})^\vee}{\to}\wedge^2\frakg_{-1}\overset{\wedge^2 \psi_E^{-1}}{\to} \wedge^2 \Gamma(T\scrF) \overset{\text{Lie brackets}}{\to} \Gamma(T\scrF).\end{equation}
Explicitly, for any $B \in \frakg_{-2} = (\wedge^1 \frakg_-)_{-2}$ if
$$(\partial_b^{-1})^\vee(-B) = \sum_{i=1}^r A_{1,i} \wedge A_{2,i} = (\wedge^2 \frakg_-)_{-2} = \wedge^2 \frakg_{-1}
,$$ 
then the image of \eqref{MappingtoVF} is given by
\begin{equation} \label{HB_extend} H_B = \sum_{i=1}^r [H_{A_{1,i}}, H_{A_{2,i}}],\end{equation}
which, by our construction, is uniquely defined by the element $(\partial_b^{-1})^\vee(-B)$ independent on how we choose to represent it. Furthermore, we must have $B = \sum_{i=1}^r [A_{1,i} ,A_{2,i}]$ by the definition of $(\partial_b^{-1})^\vee$.

Observe that with this definition, $\hat E^{-2} = \hat E^{-1} \oplus \{ H_B \, : \, B \in \frakg_{-2}\}$. Extend $\psi$ to $\hat E^{-2}$ by defining $\psi(H_B) = B$ for $B \in \frakg_{-2}$. We now observe that for the (still partially defined) curvature $\kappa$ of $\psi$
\begin{equation} \label{partialinversekappa}\partial_b^{-1} \kappa(B) = \sum_{i=1}^r\kappa(A_{1,i}, A_{2,i}) = -\sum_{i=1}^r \psi([H_{A_{1,i}}, H_{A_{2,i}}]) + B=0.\end{equation}
Repeating this argument for each step, that is, if $B \in \frakg_{-j-1}$, then $(\partial_b^{-1})^\vee(-B) \in (\wedge^2 \frakg_{-})_{-j-1} \subseteq \wedge^2 (\frakg_{-1} \oplus\cdots \oplus \frakg_{-j})$ and we can use \eqref{HB_extend} to define $H_B$ working iteratively for $j=1,2, \dots, \stp-1$. In this way, obtain a connection $\psi$ from the partial connection $\psi_E$ and the requirement $\partial_b^{-1} \kappa =0$, by virtue of the equivalence of \eqref{HB_extend} and \eqref{partialinversekappa}.

Finally, if $\psi_E$ is adapted, using the fact that $E_x =\gr_{x,-1}$ generates $(\gr_x, \Lbra \cdot , \cdot \Rbra)$ at every point $x \in M$ and that every $u \in \scrF_{x}$ is a Lie algebra isomorphism, it follows that $\psi$ must be adapted as well.
\end{proof}

\begin{remark}
Let $\psi = (\id + \alpha) \tilde \psi$ be two Cartan connections with respective curvatures $\tilde \kappa$ and $\kappa$. Assume that $\partial_b^{-1}\kappa =0$.
For a given $j \geq 1$ and $A \in \mathfrak{g}_{-j-1}$, let $B = \sum_{r=1}^m B_{r,1} \wedge B_{r,2} \in \wedge^2 \mathfrak{g}_{-j}$ be such that $\partial^{-1}_b\beta(A) =-\beta(B)$ for any $\beta \in \frakC^2$. Writing $(\id + \alpha)^{-1} = \id + \alpha_{inv}$ again, then by \eqref{secondKappa},
\begin{align} \label{thirdKappa}
\alpha(A) & = \sum_{r=1}^m (\id + \alpha) (d^{\tilde\psi}\alpha + \tilde \kappa)(B_{r,1} + \alpha_{inv}(B_{r,1}) , B_{r,2} + \alpha_{inv}(B_{r,2})) \\ \nonumber
& \quad + \sum_{r=1}^m (\id+\alpha) \left( - \alpha_{inv}(B_{r,1}) B_{r,2}   +\alpha_{inv}(B_{r,2})B_{r,1} \right. \\ \nonumber
& \qquad \qquad \qquad \qquad \qquad \qquad \qquad \left. - [ \alpha_{inv}(B_{r,1}) ,  \alpha_{inv}(B_{r,2})]\right) 
\end{align}
which iteratively determines $\alpha|_{\frakg_{\leq -2}}$ from $\tilde \kappa$ and $\alpha|_{\frakg_{-1}}$.
\end{remark}

\subsection{Unique extension for affine connections}
\emph{A partial (affine) connection} $\nabla^E$ on $E$ is a map $\Gamma(E) \times \Gamma(E) \to \Gamma(E)$, $(X, Y) \to \nabla_X^E Y$ which is tensorial in the first argument, linear in the second and satisfies Leibniz rule $\nabla_X^E fY = Xf Y + f\nabla^E_X Y$. From any adapted partial Cartan connection $\psi_E = (\theta_E, \omega_E)$ on~$\scrF$, we can define a partial affine connection by letting parallel transport of a frame correspond to curves whose derivatives are in $\ker \omega_E$. As we define it from a partial Cartan connection in the $G_0$-principal bundle $\scrF$, the parallel transport along any loop tangent to $E$ from $\nabla^E$ must give us an element in $\Gr_0|_E$. In other words, adapted partial Cartan connections are in one-to-one correspondence with partial affine connection on $E$ whose horizontal holonomy group $\Hol_{E,x}(\nabla^E)$ satisfies $\Hol_{E,x}(\nabla^E) \subseteq \Gr_0|_E$. See \cite{CGJK19} for the definition of the horizontal holonomy group.

In order to write the corresponding extension condition for $\partial_b^{-1} \kappa =0$ for affine connections and gradings, we introduce the mapping $\chi: TM \to \wedge^2 TM$ as the pseudo-inverse $\chi = -T_0^{-1}$ of $T_0$. Then we can rewrite Lemma~\ref{lemma:CartanExtension} as follows.
\begin{lemma}
If $\nabla^E$ is a partial affine connection with horizontal holonomy group in $\Gr_0|_E$, then there exists a unique $E$-grading $I$ and strongly compatible connection $\nabla$ such that the curvature $R$ and the torsion $T$ of $\nabla$ satisfy
\begin{equation} \label{Extend} R(\chi(\cdot)) =0, \qquad (T-T_0)(\chi(\cdot)) = T(\chi(\cdot)) + \pr_{\leq -2} =0,\end{equation}
where $\pr_{\leq -2}: TM \to TM_{-2} \oplus \cdots \oplus TM_{-\stp}$ is the projection to degrees lower than $-1$ with kernel $E$.
\end{lemma}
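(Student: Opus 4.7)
The plan is to reduce the statement to Lemma~\ref{lemma:CartanExtension} by passing to the nonholonomic frame bundle $\scrF$, using the dictionary between adapted (partial) Cartan connections and pairs $(I,\nabla)$ of $E$-gradings together with strongly compatible affine connections, as developed in Section~\ref{sec:ConnectionSR} and at the start of Section~\ref{sec:PartialConnection}.

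\textbf{Step 1 (translate hypothesis).} The assumption that the horizontal holonomy of $\nabla^E$ lies in $\Gr_0|_E$ is precisely the condition that $\nabla^E$ arises from a unique adapted partial Cartan connection $\psi_E=(\theta_E,\omega_E)$ on $\scrF$. Apply Lemma~\ref{lemma:CartanExtension} to get a unique adapted Cartan connection $\psi$ on $\scrF$ extending $\psi_E$ and satisfying $\partial_b^{-1}\kappa=0$. By the manifold-side correspondence, $\psi$ in turn determines a unique pair $(I,\nabla)$ with $\nabla$ strongly compatible with $(M,E,g,I)$ whose restriction to $E$ recovers $\nabla^E$. Existence and uniqueness of $(I,\nabla)$ are therefore inherited directly from Lemma~\ref{lemma:CartanExtension}.

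\textbf{Step 2 (translate the condition $\partial_b^{-1}\kappa=0$).} Under the correspondence, the $\frakg_-$-part $\pr_-\kappa$ of the curvature corresponds to $T-T_0$ and the $\frakg_0$-part $\pr_0\kappa$ corresponds to $R$. Since $\partial_b(A\wedge\beta)=A\wedge\partial_b\beta$ for $A\in\frakg$, the operator $\partial_b^{-1}$ acts only on the $\wedge^2\frakg_-^*$-factor of $\frakC^2$. Dually, $(\partial_b^{-1})^\vee$ sends an element of $\frakg_{-j-1}$ to an element of $(\wedge^2\frakg_-)_{-j-1}$, and by construction (cf.\ the proof of Lemma~\ref{lemma:CartanExtension} and Remark~\ref{re:Boxb}(b)) this map is the pseudo-inverse of the Lie bracket $\Lbra\cdot,\cdot\Rbra:\wedge^2\frakg_-\to\frakg_-$. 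Transported to the manifold via $I$, the Lie bracket becomes $-T_0$, and hence $(\partial_b^{-1})^\vee$ becomes $\chi=-T_0^{-1}$. Component by component, the single equation $\partial_b^{-1}\kappa=0$ therefore splits into the two manifold-side equations $R(\chi(\cdot))=0$ and $(T-T_0)(\chi(\cdot))=0$.

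\textbf{Step 3 (equivalent form of the torsion condition).} Because $\chi=-T_0^{-1}$, we have $T_0(\chi(X))=-T_0T_0^{-1}(X)$, and $T_0T_0^{-1}$ is the orthogonal projection onto $\image T_0=TM_{-2}\oplus\cdots\oplus TM_{-\stp}$, so $T_0(\chi(X))=-\pr_{\leq-2}(X)$. This gives $(T-T_0)(\chi(X))=T(\chi(X))+\pr_{\leq-2}(X)$, which is the second equality in \eqref{Extend}.

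The only nontrivial point is the identification in Step~2 of $(\partial_b^{-1})^\vee$ on $\frakg_-$ with the pseudo-inverse of the bracket; this is where the inner-product conventions of Section~\ref{sec:CarnotAlgebra} (the metric on each $\frakg_{-j}$ being the one induced by the bracket on $\frakg_{-1}$) are essential, because they ensure that the algebraic pseudo-inverse used to define $\partial_b^{-1}$ really does correspond to the geometric pseudo-inverse used to define $\chi$. Once this identification is in place, the rest of the proof is a direct transport of Lemma~\ref{lemma:CartanExtension} across the established bijection.
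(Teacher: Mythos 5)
Your proposal is correct and follows essentially the same route as the paper: the paper introduces $\chi=-T_0^{-1}$ precisely so that this lemma becomes a restatement of Lemma~\ref{lemma:CartanExtension} under the established dictionary between adapted (partial) Cartan connections and pairs $(I,\nabla)$, which is exactly the reduction you carry out. Your Step~2 identification of $(\partial_b^{-1})^\vee$ with the bracket's pseudo-inverse transported by $I$ (using that each frame $u$ is an isometry for the bracket-induced inner products on $\frakg_{\leq -2}$) correctly supplies the one translation detail the paper leaves implicit, while the paper's subsequent items (i)--(iii) merely make the resulting extension explicit on the manifold.
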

We look explicitly at how the grading can be constructed. Let $E = TM_{-1}$ and define $\nabla_X Y = \nabla^E_{X}Y$. Following an induction argument, we assume that $E^{-j} = TM_{-1} \oplus \cdots \oplus TM_{-j}$ has been defined with inner products and that $\nabla_X Y$ has been defined whenever $X$ and $Y$ takes values in $E^{-j}$. Furthermore, we assume that $\chi: TM_{\geq -j} \to (\wedge^2 TM)_{\geq -j}$, $T_0, T: (\wedge^2 TM)_{\geq -j} \to TM_{\geq-j}$ and $R: (\wedge^2 TM)_{\geq -j} \to (TM^*\otimes TM)_{\geq -j}$ has been defined with
$(T-T_0)(\chi(TM_{\geq-j})) =0$ and $R(\chi(TM_{\geq -j})) =0$. These conditions are trivially satisfied for $j=-1$. We continue to the next step as follows.
\begin{enumerate}[\rm (i)]
\item Consider the map
$$(\wedge^2 TM)_{\geq -j-1}  \to TM_{\geq -j-1} \bmod TM_{\geq -j}, \quad X \wedge Y \mapsto [X,Y] \bmod E^{-j}$$
and let $\calX_j \subseteq (\wedge^2 TM)_{\geq -j-1}$ denote the orthogonal complement of the kernel of this map in $(\wedge^2 TM)_{\geq -j-1}$.
\item Define
\begin{align*}
T&M_{-j-1} := \left\{\sum_{i=1}^r T_0(X_{1,i},X_{2,i})  \, : \, \sum_{i=1}^r X_{1,i} \wedge X_{2,i} \in \Gamma(\calX_j) \right\} \\
&= \left\{ \sum_{i=1}^r (\nabla_{X_{1,i}} X_{2,i} - \nabla_{X_{2,i}} X_{1,i} - [X_{1,i},X_{2,i}]) \, : \, \sum_{i=1}^r X_{1,i} \wedge X_{2,i} \in \Gamma(\calX_j) \right\}.
\end{align*}
Use $T_0$ from $\calX_j$ to $TM_{-j-1}$ to define an inner product.
\item Since we can now write any section $Y$ in $TM_{-j-1}$ as $Y= \sum_{i=1}^r T_0(X_{1,i}, X_{2,i})$, we can define $$\nabla_{X} Y := \sum_{i=1}^r T_0(\nabla_X X_{1,i}, X_{2,i}) + \sum_{i=1}^r T_0(X_{1,i}, \nabla_X X_{2,i})$$
and
$$\nabla_{Y} := \sum_{i=1}^r \left(  \nabla^2_{X_{i,1}, X_{i,2}} - \nabla^2_{X_{2,1}, X_{i,1}} \right).$$
This definition ensures that $R(\chi(E^{-j-1})) =0$ and $(T-T_0)(\chi(E^{-j-1})) =0$.
\end{enumerate}
Proceeding this way iteratively, we define the grading and connection.

\begin{remark}
To produce a partial connection $\nabla^E$ with the correct horizontal holonomy group, we can proceed in the following way. Let $(M, E, g)$ be a manifold with constant symbol $\frakg_{-}$. Let $U$ be a local section of the nonholonomic frame bundle~$\scrF$. Let $I:TM \to \gr_-$ be an $E$-grading and define locally
$$X_A(x) = I(x)^{-1} U(x)^{-1}A, \quad A\in\frakg_-.$$
In other words, we have defined a subspace of vector fields $\{ X_{A} \, : \, A \in \frakg_{-}\}$ such that their Lie brackets coincide with those in $\frakg_-$ modulo terms of higher orders. In other words, if $A \in \frakg_{-i}$, $B \in \frakg_{-j}$, then for any $x \in M$
$$[X_{A}, X_{B}] |_x = X_{[A,B]}|_x \mod \spn\{X_C|_x \, : \, C \in \frakg_{-i-j+1} \oplus \cdots \oplus \frakg_{-1} \}.$$
Now, consider the flat connection $\tilde \nabla$ given by $\tilde \nabla X_A =0$ for any $A \in \frakg_{-}$ 
and write $\tilde \nabla^E = \nabla_{|E}|_E$. Then any other partial connection $\nabla^E$ with $\Hol_{E,x}(\nabla^E) \subseteq \Gr_0|_E$ can be written as
$$\nabla^E = \tilde \nabla^E + \eta, \qquad \eta \in \Gamma(T^*M \otimes \gr_0|_E).$$
Then such locally defined connections over the local sections can be glued together, as usual.
\end{remark}

\subsection{Holonomy and horizontal holonomy of a Cartan connection}
The map $\chi$ is an example of what is called \emph{a selector} in \cite{CGJK19}, where it is also shown that the condition $R(\chi(\cdot)) =0$ in \eqref{Extend} ensures that the group formed by parallel transport relative to $\nabla$ along all loops equals that of parallel transport along just loops tangent to $E$. We want to show a similar result for Cartan connections.

Let $\psi: T\scrF \to \frakg$ be a Cartan connection on the nonholonomic frame bundle $G_0 \to \scrF \to M$. Let $G$ be the simply connected Lie group corresponding $\frakg$. Define a $G$-principal bundle $G \to \hat \scrF \to M$ by $\hat \scrF = \scrF \times_{G_0} G$, i.e., the product $\scrF \times G$ quotiented out by the equivalence relation $(u\cdot g_0, g_0^{-1}g) \sim (u,g)$, $u\in \scrF$, $g_0 \in G$, $g\in G$. Write $ug$ for the equivalence class of $(u,g)$. Let $\inc:\scrF \to \hat \scrF$ be the inclusion given by $u \mapsto u1$. We will follow \cite{CapGrover14}.

Given $\psi$, we define the principal connection $\hat \psi : \hat \scrF \to \frakg$ uniquely by the condition $\inc^* \hat \psi=\psi$. In other words, for any $A\in \frakg$, write $H_A$ for the vector field of $T\scrF$ satisfying $\psi(H_A) = A$, and write $\xi_A$ for the vector field on $\hat \scrF$ defined by
$$\xi_A(ug) = \frac{d}{dt} uge^{tA}|_{t=0}.$$
From the condition $\inc^* \hat \psi = \psi$, if we define a vector field on $\hat \scrF$ such that
$$\hat H_A(ug) = (\inc_* H_{\Ad(g) A}(u)) \cdot g,$$
then $\hat \psi(\hat H_A) = A$. Remark that the vector field $\hat H_A$ is well defined since $H_A(u) \cdot g_0 = H_{\Ad(g_0)A}(u\cdot g_0)$. Furthermore, for any $s \in \frakg_0$, we have
\begin{align*}
    \hat H_s(ug) & = (\inc_* H_{\Ad(g) s}(u)) \cdot g = \frac{d}{dt} (u \cdot e^{t\Ad(g)(s)})g |_{t=0} \\
    & = \frac{d}{dt} uge^{ts} |_{t=0} = \xi_s(ug),
\end{align*}
and
$$\ker \hat \psi = \spn \{ \hat H_A - \xi_A \, : \, A \in \frakg_-\}.$$
Just as with any principal connection on a principal bundle, we can define holonomy group $\Hol_{ug}(\hat \psi) \subset G$ at any $ug \in \hat \scrF$. These are all conjugate, and we define the holonomy $\Hol(\psi)$ of $\psi$ as the conjugacy class of these groups. Similarly, we define the horizontal holonomy $\Hol_E(\psi)$ as the conjugacy class obtained by only considering loops tangent to $E$. In other words, for any $ug \in \hat \scrF_x$, $x \in M$, define
$$\Hol_{E,ug}(\hat \psi) = \left\{ \tilde g \in G \, : \, \begin{array}{c}
\text{There exists $\gamma:[0,1] \to \hat \scrF$, $\gamma(0) = ug$}\\
\text{$\dot \gamma(t) \in \spn\{\hat H_A - \xi_A \, :A\in \frakg_{-1} \}$, $\gamma(1) = ug\tilde g \in \hat \scrF_x$}
\end{array}\right\}.$$
Remark that from this definition, $\Hol_E(\psi)$ only depends on the partial connection $\psi_E = \psi|_{(\pi_*)^{-1}E}$, which means that we can define $\Hol_E(\psi) = \Hol(\psi_E)$

We then have the following result.
\begin{theorem} \label{th:HolE}
Let $\psi$ be a Cartan connection on $\scrF$ with curvature $\kappa$ such that $\partial_b^{-1} \kappa =0$. Then $\Hol(\psi_E) = \Hol(\psi)$.
\end{theorem}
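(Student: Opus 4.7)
Put $\scrD_E:=\spn\{\hat H_A-\xi_A:A\in\frakg_{-1}\}$ and $\scrD:=\ker\hat\psi$, so $\scrD_E\subseteq\scrD$. The inclusion $\Hol(\psi_E)\subseteq\Hol(\psi)$ is immediate. My plan for the reverse is to invoke Stefan--Sussmann's orbit theorem: once the Sussmann orbits $\mathcal{O}_{\scrD_E}(p)$ and $\mathcal{O}_\scrD(p)$ are shown to coincide as immersed submanifolds of $\hat\scrF$, intersecting with the fibre $\hat\scrF_{\hat\pi(p)}$ yields $\Hol_{E,p}(\hat\psi)=\Hol_p(\hat\psi)$. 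Since $\mathcal{O}_{\scrD_E}(p)\subseteq\mathcal{O}_\scrD(p)$ is trivial, it suffices to show that the Lie saturation of $\scrD_E$ contains $\scrD$ pointwise.

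The infinitesimal engine is a bracket identity at points $u1\in\inc(\scrF)\subset\hat\scrF$. Because the integral curve of $\hat H_A$ through $u1$ is $\inc\circ\phi^{H_A}$, the structural equation $[H_A,H_B]=H_{[A,B]-\kappa(A,B)}$ on $\scrF$ pushes forward to $[\hat H_A,\hat H_B](u1)=\hat H_{[A,B]-\kappa(A,B)(u)}(u1)$. Combined with the globally valid equivariance relations $[\hat H_A,\xi_B]=\hat H_{[A,B]}$ and $[\xi_A,\xi_B]=\xi_{[A,B]}$ and the decomposition $\kappa=\kappa_-+\kappa_0$ along $\frakg=\frakg_-\oplus\frakg_0$, a direct expansion gives, for all $A,B\in\frakg_-$,
\[
   [\hat H_A-\xi_A,\,\hat H_B-\xi_B](u1) \;=\; -\hat H_{[A,B]+\kappa_-(A,B)(u)}(u1) \;+\; \xi_{[A,B]-\kappa_0(A,B)(u)}(u1).
\]

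Now I apply the extension condition $\partial_b^{-1}\kappa=0$. For $A\in\frakg_{-k-1}$ with $k\ge 1$, write $(\partial_b^{-1})^\vee(-A)=\sum_i A_{1,i}\wedge A_{2,i}$; by definition this delivers both $A=\sum_i[A_{1,i},A_{2,i}]$ and $\sum_i\kappa(A_{1,i},A_{2,i})=0$, so summing the preceding identity every curvature contribution cancels and one obtains
\[
   \sum_i[\hat H_{A_{1,i}}-\xi_{A_{1,i}},\,\hat H_{A_{2,i}}-\xi_{A_{2,i}}](u1) \;=\; -(\hat H_A-\xi_A)(u1).
\]
An induction on the degree of $A\in\frakg_-$ (the base $A\in\frakg_{-1}$ is tautological, and the inductive step exhibits at each $u1$ a specific vector field in the Lie saturation of $\scrD_E$ whose value at $u1$ equals $(\hat H_A-\xi_A)(u1)$) then shows that $\scrD$ lies pointwise in the Lie saturation of $\scrD_E$ throughout $\inc(\scrF)$.

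It remains to propagate this pointwise inclusion from $\inc(\scrF)$ to all of $\hat\scrF$. The $G_0$-invariance of $\scrD_E$ handles $\inc(\scrF)\cdot G_0=\inc(\scrF)$, while for directions transverse to $G_0$, i.e.\ along $\exp(\frakg_-)\subset G$, one combines the $G$-invariance of $\scrD$ with the flow-invariance of the Sussmann saturation, using that the $\scrD_E$-orbit of any point of $\inc(\scrF)$ already projects onto all of $M$ by the bracket-generating property of $E$. The orbit theorem then delivers $\mathcal{O}_{\scrD_E}(p)=\mathcal{O}_\scrD(p)$ and hence $\Hol(\psi_E)=\Hol(\psi)$. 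I expect this final propagation to be the main technical obstacle: the submanifold $\inc(\scrF)$ has codimension $\dim M$ in $\hat\scrF$, and since $\scrD_E$ is only $G_0$-invariant while $\scrD$ is fully $G$-invariant, the extension cannot be done by pure algebraic equivariance and must rely on the dynamics generated by $\scrD_E$-flows.
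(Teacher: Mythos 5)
Your overall strategy coincides with the paper's: compare the Sussmann orbits of $\calH_E=\spn\{\hat H_A-\xi_A: A\in\frakg_{-1}\}$ and $\calH=\ker\hat\psi$ by showing that the Lie hull of $\calH_E$ already contains $\calH$, using the structure equation for the fields $\hat H_A-\xi_A$ together with the cancellation of curvature terms forced by $\partial_b^{-1}\kappa=0$; your bracket computation at points $u1\in\inc(\scrF)$ agrees with the paper's. The genuine gap is exactly the step you flag yourself. The orbit-comparison lemma you want to invoke (Lemma~2.12 of \cite{CGJK19}, which is also what the paper uses) requires $\calH_q\subseteq\Lie_q\calH_E$ at \emph{every} $q=ug\in\hat\scrF$, and the holonomy group is read off from the intersection of the orbit with a whole fibre, so the points $ug$ with $g\notin G_0$ are precisely the ones that matter. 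Your proposed propagation does not close this: surjectivity of the projection of the $\calH_E$-orbit onto $M$ controls which fibres are reached but says nothing about which directions inside a fibre are reached, and the $G$-invariance of $\calH$ is of no help because the object whose Lie hull you must control is $\calH_E$, which is only $G_0$-invariant (indeed $R_{g*}(\hat H_A-\xi_A)=\hat H_{\Ad(g^{-1})A}-\xi_{\Ad(g^{-1})A}$ leaves $\calH_E$ as soon as $g$ has a nontrivial $\frakg_-$-part).

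The paper sidesteps any propagation by working globally from the outset: with $\hat H_A(ug)=(\inc_*H_{\Ad(g)A}(u))\cdot g$ one has $[\hat H_A,\hat H_B]=\hat H_{[A,B]-\hat\kappa(A,B)}$ at every point of $\hat\scrF$, where $\hat\kappa(ug)(A,B)=\Ad(g^{-1})\kappa(u)(\Ad(g)A,\Ad(g)B)$, and the identity $\hat H_B-\xi_B=-\sum_i[\hat H_{A_{i,1}}-\xi_{A_{i,1}},\hat H_{A_{i,2}}-\xi_{A_{i,2}}]$ is then obtained as an identity of vector fields on all of $\hat\scrF$, which feeds directly into the hypothesis of the orbit lemma. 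To complete your argument you should carry out this global version of your computation; be aware that it requires the cancellation $\sum_i\hat\kappa(ug)(A_{i,1},A_{i,2})=0$ to survive the twist by $\Ad(g)$ (immediate from equivariance of $\partial_b^{-1}$ when $g\in G_0$, but needing an argument when $g$ has a $\frakg_-$-component, since $\wedge^2\Ad(g)$ then no longer preserves $\image(\partial_b^{-1})^\vee$). So you have correctly located the one delicate point of the proof, but as written your proposal leaves it open rather than resolving it.
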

Remark the result here can never give an ``if and only if''-statement. For example, if a partial connection $\psi_E$ is such that $\Hol(\psi_E) = G$, then any extension of $\psi_E$ to a full Cartan connection will have the same holonomy.
\begin{proof}
By definition, for $A,B \in \frakg$,  the Lie brackets of the corresponding vector fields on $\hat \scrF$ are given by
$$[\xi_A, \xi_B] = \xi_{[A,B]}, \qquad [\xi_A, \hat H_B] = \hat H_{[A,B]}.$$
Furthermore, if $A, B \in \frakg_-$, then
$$[\hat H_A, \hat H_B] = \hat H_{[A,B] -\hat \kappa(A,B)},$$
where $\hat \kappa(ug)(A,B) = \Ad(g^{-1}) \kappa(u)(\Ad(g) A, \Ad(g) B)$. 
It follows that for $A,B \in \frakg_-$,
$$[\hat H_A - \xi_A, \hat H_B - \xi_B] = -(\hat H_{[A,B]} - \xi_{[A,B]}) - \hat H_{\hat \kappa(A,B)}.$$

Define subbundles $\calH_E$ and $\calH$ of $T\hat \scrF$ by
$$\calH_E = \spn \{ H_A - \xi_A \, : \, A\in \frakg_{-1} \}, \qquad \calH = \{H_A -\xi_A \, : \, A \in \frakg_- \}.$$
Let $\scrO_{E,ug}$ and $\scrO_{ug}$ be the respective orbits of these subbundles at $ug \in \hat \scrF$. In other words, $\scrO_{E,ug}$ consist of all points in $\hat \scrF$ such that can be reached by curve tangent to $\calH_E$, with similar definition for $\scrO_{ug}$. These orbits determine the holonomy since
$$\Hol_{E,ug}(\hat \psi) = \{ \tilde g \in G \, : \, ug\tilde g \in \scrO_{E,ug} \},$$
and similarly for $\Hol_{ug}(\hat \psi)$. Since $\calH_E \subseteq \calH$, it follows that $\scrO_{E,ug} \subseteq \scrO_{ug}$. To show that they are indeed equal, write $\Lie_{ug} \calH_E$ and $\Lie_{ug} \calH$ for the subspace of $T_{ug} \hat \scrF$ spanned by vector fields in respectively $\calH_E$ and $\calH$ and their iterated Lie brackets. By \cite[Lemma~2.12]{CGJK19}, if we show that $\Lie_{ug} \calH_E = \Lie_{ug} \calH$ for every $ug \in \hat \scrF$, then we will also have $\scrO_{E,ug} =\scrO_{ug}$ for every $ug \in \scrF$. In order to show this result, it is sufficient to show $\hat H_B(ug) - \xi_{B}(ug) \in \Lie_{uq} \calH_E$ for any $B \in \frakg_-$.

As in the proof of Lemma~\ref{lemma:CartanExtension}, define $(\partial_b^{-1})^\vee: \frakg_- \to \wedge^2 \frakg_-$ as the dual of $\partial_b^{-1}: \wedge^2 \frakg_-^* \to \frakg_-^*$. If $\partial_b^{-1}\kappa =0$, if follows that if $(\partial_b^{-1})^\vee(B) = \sum_{i=1}^r A_{i,1} \wedge A_{i,2}$ for $B\in g_{-j}$, $j \geq 2$, then
$$\hat H_B - \xi_B = - \sum_{i=1}^r[\hat H_{A_{i,1}} - \xi_{A_{i,1}}, \hat H_{A_{i,2}} - \xi_{A_{i,2}}],$$
where $A_{i,1}, A_{i,2} \in \frakg_{-j+1} \oplus \cdots \oplus \frakg_{-1}$. Using this argument iteratively, it follows that any $\hat H_B - \xi_{B}$ can be obtained by iterated brackets of vector fields of the form $\{ H_A- \xi_A \, : \, A \in \frakg_{-1}\}$. Hence, $\hat H_B(ug) - \xi_{B}(ug) \in \Lie_{uq} \calH_E$ for any $B \in \frakg_-$ and $ug \in \hat \scrF$, which completes the proof.
\end{proof}

\section{Finding normalization conditions} \label{sec:Normalization}

\subsection{Canonical connection}
We are now ready to prove the main result.
\begin{proof}[Proof of Theorem~\ref{th:main}]
Let again $\psi = (\id+\alpha) \tilde \psi$, with both connections adapted, and both with curvatures satisfying $\partial_b^{-1} \kappa = \partial_b^{-1} \tilde \kappa = 0$. Let $\psi_E$ and $\tilde \psi_E$ be the partial connections generating $\psi$ and $\tilde \psi$ respectively. If we consider the equation \eqref{KappaDegree} for homogeneity $1$, then we have
$$\kappa_1 = \partial \alpha_1 +  \tilde \kappa_1,$$
and applying $\partial_b^{-1}$ to this equation, it follows that $\partial_b^{-1} \partial \alpha_1 =0$ and $\alpha_1 \in \frakP^1$.
Observe that by the Bianchi identity, $\partial \kappa_1 = \partial \tilde \kappa_1 =0$, so $\kappa_1, \tilde \kappa_1 \in \frakP^2$.

We observe that $\Pi \kappa_1$ uniquely determines $\kappa_1$ and that
$$\Pi(\kappa_1 - \tilde \kappa_1) \in \Pi \partial P^\infty \frakC_1^1 = \Pi P^\infty \partial \frakC_1^1 = \Pi \partial \frakC_1^1.$$
Require now that $\langle \Pi  \kappa_1 , \Pi \partial \frakC^1_1 \rangle =0$, which will uniquely determine $\Pi  \kappa_1$. The partial connection $\psi_E = (\id + \alpha_E) \tilde \psi_E$ will be determined by condition $\alpha_1|_E = \alpha_E$.

Finally, we rewrite the orthogonality condition. Since $\partial$ and $P^{\infty}$ are operators of homogeneity degree  zero, we can write the condition as  $\langle \Pi \kappa_1 , \Pi \partial \frakC^1 \rangle =0$. Using the properties of $\Pi$, including the fact that it is an orthogonal projection
\begin{align*}
0 & = \langle \Pi \kappa_1 , \Pi \partial \frakC^1 \rangle  = \langle \Pi \kappa_1 ,  \partial \frakC^1 \rangle =  \langle \partial^* (\id - \partial_b^{-1} \partial_b)  \kappa_1 ,  \frakC^1 \rangle.
\end{align*} 
The result follows.
\end{proof}

\begin{corollary}
On the nonholonomic frame bundle, there exists a unique Cartan connection $\psi$ with curvature $\kappa$ such that,
$$\partial^{-1}_b \kappa =0, \qquad \partial^* (\iota_A \kappa_1) =\partial^* (\iota_A  \partial_b^{-1} \partial_b \kappa_1), \qquad \text{for any $A \in \mathfrak{g}_{-}$}.$$
\end{corollary}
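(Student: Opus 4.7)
The plan is to deduce the corollary directly from Theorem~\ref{th:main} by showing that, under the common extension condition $\partial_b^{-1}\kappa = 0$, the corollary's family of conditions (one for each $A \in \frakg_-$) is equivalent to the single normalization of Theorem~\ref{th:main}. Setting $X := \kappa_1 - \partial_b^{-1}\partial_b \kappa_1 \in \frakC^2$, the two conditions read $\partial^* X = 0$ (Theorem~\ref{th:main}) and $\partial^*(\iota_A X) = 0$ for every $A \in \frakg_-$ (corollary).

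First I would show that $X$ is harmonic, i.e.\ $X \in \ker \Box_b$. The extension condition amounts to $\kappa \in (\image \partial_b)^\perp = \ker \partial_b^*$, so in particular $\kappa_1 \in \ker \partial_b^*$. Since $(\partial_b^*)^2 = 0$, one has $\image(\partial_b^{-1}\partial_b) = \image(\partial_b^*) \subseteq \ker \partial_b^*$, so $\partial_b^{-1}\partial_b \kappa_1 \in \ker \partial_b^*$, and hence $X \in \ker \partial_b^*$. On the other hand, $\partial_b^{-1}\partial_b$ is the orthogonal projection onto $(\ker \partial_b)^\perp$, so automatically $X \in \ker \partial_b$. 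Thus $X \in \ker \partial_b \cap \ker \partial_b^* = \ker \Box_b$, and in particular $\partial_b^* X = 0$.

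Next I would establish the key identity
\begin{equation*}
(\partial^* \beta)(A) = (\partial_b^* \beta)(A) - \partial^*(\iota_A \beta), \qquad \beta \in \frakC^2,\ A \in \frakg_-,
\end{equation*}
by a direct orthonormal-basis computation. In an orthonormal basis $\{e_k\}$ of $\frakg_-$, both $(\partial^* \beta)(e_k)$ and $(\partial_b^* \beta)(e_k)$ contain the same term $-\tfrac12 \sum_{i,j}\langle [e_i,e_j], e_k\rangle \beta(e_i,e_j)$, arising from the $\alpha([A_0,A_1])$ piece common to $\partial\alpha$ and $\partial_b\alpha$, so that these cancel in the difference. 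What remains, $\sum_i \ad_{e_i}^* \beta(e_i, e_k)$, is recognized via antisymmetry of $\beta$ as $-\sum_i \ad_{e_i}^* \beta(e_k, e_i) = -\partial^*(\iota_{e_k}\beta)$.

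Combining the two steps, since $\partial_b^* X = 0$ the identity collapses to $(\partial^* X)(A) = -\partial^*(\iota_A X)$ for every $A \in \frakg_-$. Hence $\partial^* X = 0$ if and only if $\partial^*(\iota_A X) = 0$ for all $A$, so the corollary is just a reformulation of Theorem~\ref{th:main} and inherits existence and uniqueness from it. The main delicate point is the sign bookkeeping in the key identity; once that is granted, the rest is an immediate consequence of the Hodge decomposition for $\partial_b$ applied to $\kappa_1$.
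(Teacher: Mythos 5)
Your proposal is correct and follows essentially the same route as the paper: both arguments reduce to the observation that, because the extension condition forces $\partial_b^*X=0$ for $X=(\id-\partial_b^{-1}\partial_b)\kappa_1$, one has $(\partial^*X)(A)=-\partial^*(\iota_AX)$ for all $A\in\frakg_-$, so the corollary's conditions are equivalent to the normalization condition of Theorem~\ref{th:main}. Your orthonormal-basis identity $(\partial^*\beta)(A)=(\partial_b^*\beta)(A)-\partial^*(\iota_A\beta)$ is exactly the unwinding of the paper's computation of $\langle(\id-\partial_b^{-1}\partial_b)\kappa_1,\partial(B\wedge A^*)\rangle$ via the Leibniz rule, and the signs check out.
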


\begin{proof}
Observe first that since $\partial_b^{-1} \kappa_1 =0$, we have $\partial_b^{-1} (\id - \partial_b^{-1} \partial_b) \kappa_1 =0$ and then also $\partial^* (\id - \partial_b^{-1} \partial_b) \kappa_1 =0$. If we consider a general element, $B \wedge A^* \in \frakC^1$, with $A \in \frakg_{-}$, $B \in \frakg$, then
\begin{align*}
0 & = \langle \partial^* (\id - \partial_b^{-1} \partial_b) \kappa_1,B \wedge A^* \rangle = \langle  (\id - \partial_b^{-1} \partial_b) \kappa_1, \partial(B \wedge A^* ) \rangle \\
&  = \langle  (\id - \partial_b^{-1} \partial_b) \kappa_1, \partial B \wedge A^* + \partial_b (B \wedge A^*) \rangle  \\
&  = -  \langle \iota_A (\id - \partial_b^{-1} \partial_b) \kappa_1,  \partial B \rangle  = -  \langle \partial^* (\iota_A (\id - \partial_b^{-1} \partial_b) \kappa_1), B \rangle .
\end{align*}
The result follows.
\end{proof}

\begin{remark} \label{re:Complements}
When computing $\psi$, we can require that $\kappa$ is orthogonal to $\partial_b \frakC^1_{\geq 1}$ and that $\kappa_1$ is $\partial_b^*$ orthogonal to $\Pi \partial \frakC_1^1$. Furthermore, since  $\Pi \partial \frakC_1^1 = (\id - \partial_b^{-1} \partial_b) \partial \frakC_1^1 \mod \partial_b \frakC^1_1$, we can use orthogonality to $(\id - \partial_b^{-1} \partial_b) \partial \frakC_1^1$ instead. If $\psi = (\id  + \alpha)\tilde \psi$ with $\tilde \psi$ a known connection, then we first solve the equations
$$\partial^* (\id - \partial^{-1}_b \partial_b) (\partial \alpha_1 + \tilde \kappa_1) =0, \qquad \partial_b^{-1} (\partial \alpha_1+ \tilde \kappa_1) =0.$$
\end{remark}

\subsection{Cartan connections on the nonholonomic frame bundle}
We can define our condition in terms of $E$-gradings $I$ and  affine connections $\nabla$, with the torsions $T$ and curvatures $R$. For two-forms $\Omega^2(M)$ on $M$, let $\Omega^2_{\Jac}(M)$ denote the subspace of forms that satisfy the property
$$\alpha(T_0(v_1, v_2), v_3) + \alpha(T_0(v_3, v_1), v_2) + \alpha(T_0(v_2, v_3), v_1) =0,$$
with $\alpha \in \Omega^2(M), v_1, v_2, v_3 \in TM$.
For any $\alpha \in \Omega^2$, we write $\alpha_{\Jac}$ for its orthogonal projection to $\Omega_{\Jac}^2$. Such a projection induces a projection also for forms with values in a vector bundle, for which we will use the same symbol.

\begin{theorem} Let $(M,E,g)$ be a sub-Riemannian manifold of constant symbol.
There is a unique $E$-grading $I$ and strongly compatible connection $\nabla$ satisfying the extension condition
$$R(\chi(\cdot)) =0, \qquad (T-T_0)(\chi(\cdot))  =0,$$
and
\begin{align*}
\langle T_{\Jac}(v, \cdot), s \rangle & =0, \qquad &  v \in E, s \in \iso, \\
\langle T_{\Jac}(v, \cdot ), T_0(w, \cdot) \rangle & =0, \qquad &v \in TM_{-j-1}, w\in TM_{-j}.
\end{align*}
\end{theorem}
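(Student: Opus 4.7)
The plan is to apply Theorem~\ref{th:main} via the one-to-one correspondence between adapted Cartan connections on $\scrF$ and pairs $(I,\nabla)$ described in Section~\ref{sec:ConnectionSR}. The extension condition $\partial_b^{-1}\kappa=0$ translates verbatim into $R(\chi(\cdot))=0$ and $(T-T_0)(\chi(\cdot))=0$ by the affine analogue of Lemma~\ref{lemma:CartanExtension} stated in Section~\ref{sec:ExtendPartial}, so only the normalization $\partial^*\kappa_1=\partial^*\partial_b^{-1}\partial_b\kappa_1$ remains to be translated.

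Two preparatory observations are needed. First, $\kappa_1$ is purely $\frakg_-$-valued: from $k+i_1+i_2=1$ for $B\in\frakg_k$ paired with arguments in $\frakg_{-i_1}\otimes\frakg_{-i_2}$ and $i_1,i_2\geq 1$ one forces $k\leq -1$, so $\pr_0\kappa$ contributes nothing to homogeneity one and $\kappa_1$ corresponds exactly to the degree one torsion $T_1$ on the manifold side. Second, the Jacobi relation defining $\Omega^2_{\Jac}$ is precisely $\partial_b\alpha=0$ in the algebraic model (a direct computation of $\partial_b$ with trivial representation using the $T_0$-arguments as brackets), so the projection $T\mapsto T_{\Jac}$ is the manifold counterpart of $\id-\partial_b^{-1}\partial_b$. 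A degree check confirms that only $T_{1,\Jac}$ carries the same grading shift as $s\in\iso$ in the first pairing and as $T_0(w,\cdot)$ in the second, so both stated conditions on $T_{\Jac}$ reduce to the same conditions with $T_{1,\Jac}$ in place of $T_{\Jac}$.

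Since the extension condition gives $\partial_b^*\kappa=0$ by Remark~\ref{re:Boxb}(b), the orthogonal Hodge-type decomposition implies $\kappa_{1,\Jac}:=(\id-\partial_b^{-1}\partial_b)\kappa_1\in\ker\Box_b$. Hence $\partial^*\kappa_{1,\Jac}=0$ is equivalent to $\langle\kappa_{1,\Jac},\partial\alpha\rangle=0$ for every $\alpha\in\frakC^1$ of homogeneity one. Writing such $\alpha=B\otimes v^*$ with $v\in\frakg_{-e}$ and $B\in\frakg_{1-e}$, we have
\begin{equation*}
\partial(B\otimes v^*)=\partial B\wedge v^*+\partial_b(B\otimes v^*),
\end{equation*}
and since $\partial_b(B\otimes v^*)\in\image\partial_b$ is orthogonal to $\kappa_{1,\Jac}\in\ker\Box_b$, the equation reduces to $\langle\kappa_{1,\Jac},\partial B\wedge v^*\rangle=0$. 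Splitting by $e$: for $e=1$, with $B\in\frakg_0=\iso$ acting as $A\mapsto[B,A]$, expanding the pairing on an orthonormal basis yields exactly $\langle T_{\Jac}(v,\cdot),s\rangle$ with $v\in E$ and $s\in\iso$; for $e=j+1\geq 2$, with $B\in\frakg_{-j}$ corresponding to $w\in TM_{-j}$ under $I$ and the bracket $[\cdot,B]$ corresponding to $-T_0(\cdot,w)$, the same expansion gives $\langle T_{\Jac}(v,\cdot),T_0(w,\cdot)\rangle$ with $v\in TM_{-j-1}$. Existence and uniqueness of $(I,\nabla)$ then follow directly from Theorem~\ref{th:main}.

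The main technical obstacle is the careful bookkeeping of signs and identifications when passing between the Cartan-algebra picture and the manifold picture, most notably matching the inner product on $\frakC$ with the Hilbert--Schmidt pairing on $\End(TM)$, and aligning the adjoint action of $\frakg_0$ on $\frakg_-$ and the bracket on $\frakg_-$ with the action of $\iso$ on $TM$ and with minus $T_0$, respectively; once these are settled, nothing beyond Theorem~\ref{th:main} is required.
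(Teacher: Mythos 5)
Your proof is correct and follows essentially the same route as the paper: both reduce to Theorem~\ref{th:main} via the correspondence between adapted Cartan connections and pairs $(I,\nabla)$, the key point being that $\ker\partial_b\cap\frakC^2$ --- the orthogonal complement of $\partial_b^*(\frakg\otimes\wedge^3\frakg_-^*)=\spn\{\circlearrowright[C_1,C_2]^*\wedge C_3^*\}$ --- is exactly the algebraic model of $\Omega^2_{\Jac}$, so that $(\id-\partial_b^{-1}\partial_b)\kappa_1$ corresponds to $(T_{\Jac})_1$. The paper's proof records only this identity, while your write-up supplies the surrounding bookkeeping (that $\kappa_1$ is $\frakg_-$-valued, the degree check reducing $T_{\Jac}$ to its homogeneity-one part in both pairings, and the unwinding of $\partial^*$ through interior products as in the Corollary), all of which checks out.
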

\begin{proof}
The result follows from the identity
$$\partial_b^* (\wedge^3 \frakg_-^*) = \{ \circlearrowright [C_1,C_2]^* \wedge C_3^* \, : \, C_1, C_2, C_3 \in \frakg_{-}\},$$
meaning that $(\id - \partial_b^{-1} \partial_b)\kappa_1$ correspond to $(T_{\Jac})_1$.
\end{proof}

\begin{remark}
In comparison, the Morimoto connection is the solution of the following equations with $v \in TM_{-i}$, $w \in TM_{-j}$, $i<j$,
$$\langle R(\chi(v)), s \rangle = \langle T(v, \cdot), s \rangle, \qquad \langle T(\chi(v)), w \rangle =  - \langle T(v, \cdot) , T_0(w, \cdot) \rangle,$$
see \cite{Gro20}. These conditions are normalization conditions for both the grading and connections simultaneously. Observe that forcing the left hand sides of the equations to vanish is our extension condition, while our normalization condition is forcing the right hand sides to vanish for some values, when $T$ is replaced with $T_{\Jac}$.
\end{remark}

\section{Small dimensional examples} \label{sec:Small}
The simplest example is of course the case $\frakg_{-} = \frakg_{-1}$. Then $\partial_b = 0$, $P^\infty  =\id$, $\kappa = \kappa_1$ and we get the condition $\partial^* \kappa = 0$, which will give us the Levi-Civita connection. Going beyond this example, we look first at explicit computations in low dimensions.

\subsection{Smallest Heisenberg algebra} \label{sec:3dHeisenberg}
We consider the Heisenberg algebra $\frakg_{-} = \frakg_{-2} \oplus \frakg_{-1}$, where $\frakg_{-1} = \spn \{ A_1, A_2\}$, $\frakg_{-2} = \spn \{ B\}$ and $\frakg_0 = \spn \{s\}$, where
$$[A_1, A_2] = B, \qquad s(A_{1}) = A_{2}, \qquad s(A_{2}) = - A_{1}, \qquad s(B) =0.$$
We consider the inner product on $\frakg_{-1}$ such that $A_1, A_2$ form an orthonormal basis. All sub-Riemannian manifolds with growth vector $(2,3)$ have $\frakg_{-}$ as their constant symbols.

\subsubsection{Condition for the unique connection} We use Remark~\ref{re:Complements}.
We see that
$$\partial_b \frakC^1_{\geq 1} =  \left\{ A \wedge A_{1}^* \wedge A_{2}^* \, : \, A \in \frakg_{-1} \oplus \frakg_{0}  \right\} $$
while
$$\partial \frakC^{1}_1 = \spn\{ A \wedge A_1^* \wedge A_2^* , B \wedge B^* \wedge A^*  \, : \, A \in \mathfrak{g}_{-1} \} ,$$
so
$$\Pi \partial \frakC_1^1= \spn\{ B \wedge B^* \wedge A^*  \, : \, A \in \mathfrak{g}_{-1} \}. $$
It follows that there is a unique connection satisfying
$$\kappa(A_1, A_2) =0, \qquad \kappa_1 =0.$$
Viewed from the manifold, the condition is
$$R(v_1, v_2) =0, \qquad T = T_0 + T_2, \qquad v_1, v_2 \in E,$$
with respect to the grading described below.

\subsubsection{Description in a local basis}
Let $(M, E, g)$ be any sub-Riemannian manifold with growth vector $(2,3)$. Let $X_1, X_2$ be any choice of orthonormal basis for $E$. Define $JX_1 = X_2$ and $JX_2 = -X_1$. Any compatible affine partial connection on $E$ preserves the metric, and thus, it is uniquely determined by two functions $\alpha_1$ and $\alpha_2$ given by
$$\nabla_{X_i} X_j = \nabla^E_{X_i} X_j = \alpha_i J X_j.$$
We then set $TM_{-2} = \spn\{ Z\} = \spn \{ - T_0(X_1, X_2)\}$, with
$$Z = [X_1, X_2] -\nabla_{X_1} X_2 + \nabla_{X_2} X_1 = [X_1, X_2] + \alpha_1 X_1 +\alpha_2 X_2.$$
We extend $J$ by $JZ =0$. We have $-\nabla_{X_i} Z = \nabla_{X_i} T_0(X_1, X_2) = T_0(\nabla_{X_i} X_1, X_2) + T_0(X_1, \nabla_{X_i} X_2) =0$, and
$$\nabla_{Z} = \nabla_{X_1, X_2}^2 - \nabla_{X_1, X_2}^2 =  (X_1 \alpha_2 - X_2 \alpha_1) J.$$
This completes the extension conditions. For the final normalization, we see that
$$0 = \langle Z, T(X_1, Z) \rangle = Z^*[X_1, Z] = Z^* [X_1, [X_1,X_2]] + \alpha_2$$
and similarly, we must have $0 = \langle Z, T(X_2, Z) \rangle = Z^* [X_2, [X_1,X_2]] - \alpha_1$. In conclusion, if $[X_j, [X_1,X_2]] = f_j [X_1, X_2] \bmod E$, then the canonical connection and grading of $TM$ is given by
$$Z = [X_1, X_2] - f_2 X_1 + f_1 X_2.$$
$$\nabla = (f_2 X_1^* - f_1 X_2^* - (X_1 f_1 + X_2 f_2) Z^*) \otimes J.$$

%\subsubsection{Computations from a given connection}
%Let $\psi = (\id + \alpha) \tilde \psi$ and write $(\id + \alpha)^{-1}  = \id + \alpha_{inv} = \id - \alpha + \alpha_1^{2}$. For $\alpha_1$, we have conditions
%\begin{align*} 0 =& (\partial \alpha_1 + \tilde \kappa_1)(A_1, A_2) =[A_1, \alpha_1(A_2)] + [\alpha_1(A_1), A_2] - \alpha_1(B) + \tilde \kappa_1(A_1, A_2)  \\
%0 = & (\partial \alpha_1 + \tilde \kappa_1)(A_j, B) =[A_j, \alpha_1(B)] + \tilde \kappa_1(A_j, B) 
%\end{align*}
%The solution is given by
%\begin{align*}
%\alpha_1(B) &= s \langle \tilde \kappa_1(B, \cdot ), B \rangle^\sharp \\
%\langle \alpha_1(A_1), s \rangle & = -  \langle \alpha_1(B), A_1 \rangle + \langle A_1, \tilde  \kappa_1(A_1, A_2) \rangle, \\
%\langle \alpha_1(A_2) , s \rangle & = -  \langle \alpha_1(B), A_2 \rangle + \langle A_2, \tilde  \kappa_1(A_1, A_2) \rangle,
%\end{align*}
%We finally complete the computation of $\alpha$ using \eqref{thirdKappa}, to get
%\begin{align*}
%\alpha_2(B) & =  d^{\tilde \psi} \alpha_1(A_1, A_2) + \tilde \kappa_2(A_1,A_2)  - \left[ \alpha_1(A_1), \alpha_1(A_2) \right]  \\
%& \qquad + \alpha _1\left( \tilde \kappa_1(A_1,A_2) +  \alpha_1(A_1) A_2 - \alpha_1(A_2) A_1  \right) .
%\end{align*}

\subsubsection{Global description} We can define uniquely, up to sign, a one-form $\beta$  with $\ker \beta =E$ and $\| d\beta|_{\wedge^2 E}\| = 1$. Define \emph{the Reeb vector field} $Z$ as the unique vector field such that $\beta(Z) = 1$ and $d\beta(Z, \cdot) =0$. Define $TM_{-2} = \spn \{ Z\}$, making $Z$ orthogonal to $E$ and a unit vector field with respect to $g_I$. If $\nabla^I$ is the Levi-Civita connection of $g_I$, then our canonical connection $\nabla$ is
$$\nabla Z =0, \qquad  \nabla_X Y = \pr_{-1} \nabla_X^I Y, \qquad \nabla_Z = \nabla_{X_1,X_2}^2 - \nabla_{X_2,X_1}^2, \qquad X,Y \in \Gamma(E).$$
Note that $T_1(Z, \cdot) = 0$ corresponds to the condition $d\beta(Z, \cdot) =0$, which justifies our choice of the grading.

\begin{remark}
To compare it with earlier suggested connections, if we define a connection $\nabla''$ for $X,Y \in \Gamma(E)$ by,
$$\nabla'' Z =0 , \qquad  \nabla''_X Y  = \pr_{-1} \nabla_X^I Y, \qquad \nabla_Z ''Y = [Z,Y] + \frac{1}{2}\sharp (\calL_Z g_I)(Y, \cdot),$$
then this is the connection defined by Hladky, \cite{hladky2009connections}.
As shown in \cite{Gro20}, the Morimoto connection $\nabla'$ is given for $\tilde X, \tilde Y\in \Gamma(TM)$,
$$\nabla'_{\tilde X} \tilde Y = \nabla''_{\tilde X} \tilde Y + \frac{1}{2} R''(\chi(\tilde X)) \tilde Y,$$
and with the same grading given by the Reeb vector field.
\end{remark}

\subsection{Rolling Carnot algebra}
We consider the Carnot algebra $\frakg_{-} = \frakg_{-3}  \oplus \frakg_{-2} \oplus \frakg_{-1}$, with $\mathfrak{g}_{-1} = \spn\{ A_1, A_2 \}$, $\frakg_{-2} = \spn \{ B\}$, $\frakg_{-3} = \spn \{ C_1, C_2\}$, with
$$[A_1, A_2] = B, \qquad [A_j, B] = C_j.$$
Then we also have $\frakg_0 = \spn \{ s\}$, satisfying
$$s(A_1) = A_2, \quad s(A_2) = -A_1, \quad s(B) =0, \quad s(C_1) = C_2, \quad s(C_2) = - C_1.$$

\subsubsection{Conditions for the curvature}
We see that
$$\partial_b \frakC_{\geq 1}^1 = \spn\{ W_1 \wedge A_1^* \wedge A_2^* , W_2 \wedge A^* \wedge B^* \, : \, A \in \frakg_{-1}, W_1 \in \frakg_{\geq -1}, W_2 \in \frakg_{\geq -2}   \}.$$
Furthermore,
$$(\id - \partial_b^{-1} \partial_b) \frakC_1^1 = \spn\{ C_j \wedge C^*_j \wedge A^*_j , \, : \, j=1,2\}.$$
Hence, the restrictions are
\begin{equation} \label{RollingCond} \kappa(A_1, A_2) = \kappa(A_1,B) =\kappa(A_2,B) =0,\end{equation}
and
$$\langle \kappa(C_1, A_1) , C_1 \rangle = \langle \kappa(C_2, A_2), C_2 \rangle =0.$$
On the manifold, this is formulated as
$$R(v, v_2) = R(v, w) =0,  \qquad T(v, v_2) = T(v, w) =0,$$
$$\langle T(v, T_0(v,w)), T_0(v,w)\rangle =0  \qquad v, v_2 \in E, w \in TM_{-2},$$
with the grading defined below.

\subsubsection{Solving on the frame bundle}
We again solve first for $\alpha_1$. Define
\begin{align*}
0 = & (\partial \alpha_1 + \tilde \kappa_1)(A_1, A_2) = [\alpha_1(A_1), A_2] + [A_1, \alpha_1(A_2)] - \alpha_1(B) + \tilde \kappa_1(A_1, A_2) \\
0 = & (\partial \alpha_1 + \tilde \kappa_1)(A_j, B)=  [A_j, \alpha_1(B)] - \alpha_1(C_j) + \tilde \kappa_1(A_j, B) \\
0 = & \langle C_j , (\partial \alpha_1 + \tilde \kappa_1)(A_j, C_j)=  \langle C_j , [A_j, \alpha_1(C_j)] + \tilde \kappa_1(A_j, C_j) \rangle 
\end{align*}
which has solutions
\begin{align*}
\alpha_1(C_j) & = - \langle C_j, \tilde \kappa_1(A_j, C_j) 
\rangle B,\\
\alpha_1(B) & = - \langle \alpha(C_2) - \tilde \kappa_1(A_2, C_j), B\rangle A_1 +\langle \alpha(C_1) - \tilde \kappa_1(A_1, C_j), B\rangle A_2, \\
\alpha_1(A_1) & = \langle A_1,   - \alpha_1(B) + \tilde \kappa_1(A_1, A_2) \rangle s, \\
\alpha_1(A_2) & = \langle A_2,  - \alpha_1(B) + \tilde \kappa_1(A_1, A_2) \rangle s.
\end{align*}
The rest of $\alpha$ can be found by \eqref{thirdKappa}.

\subsubsection{Description locally on the manifold}
Let $(M, E, g)$ be a sub-Riemannian manifold with growth vector $(2,3,5)$. Let $X_1$, $X_2$ be an arbitrary orthonormal basis of~$E$.
Any compatible affine partial connection on $E$ can be expressed by functions
$$\nabla_{X_i}^E X_j =\eta_i J X_j.$$
where $JX_1 = X_2$ and $JX_2 = -X_1$. Define
$$Y = -\nabla_{X_1}^E X_2 + \nabla_{X_2}^E X_1 + [X_1,X_2] = \eta_1 X_1 + \eta_2 X_2 + [X_1, X_2].$$
Let $TM_{-2} = \spn \{ Y\}$ and define $\nabla_{X_j} Y =0$ to make sure $T_0$ is parallel. Finally, define $JY = 0$ and
$$\nabla_Y = \nabla_{X_1, X_2}^2- \nabla_{X_2, X_1}^2 = (X_1 \eta_2 - X_2 \eta_1)J.$$
Next, we define
$$Z_j = [X_j, Y] - \nabla_{X_j} Y + \nabla_Y X_j = [X_j, Y] +(X_1 \eta_2 - X_2 \eta_1)JX_j.$$
Define $JZ_1 = Z_2$ and $JZ_2 = -Z_1$. In order for $T_0$ to be parallel, we must have
$$\nabla_{X_i} Z_j = \eta_i JZ_j, \qquad \nabla_{Y} Z_j = (X_1 \eta_2 - X_2 \eta_1)JZ_j,$$
and
$$\nabla_{Z_j} = \nabla_{X_j, Y}^2 -\nabla_{Y, X_j}^2 = (X_jX_1 \eta_2 - X_j X_2 \eta_1 - Y\eta_j)J.$$

The normalization condition is given by
\begin{align*}
\langle Z_j, T(Z_j, X_j) \rangle & = \langle Z_j, [X_j, Z_j] \rangle =0.
\end{align*}
Define $[X_j,[X_j, [X_1, X_2]]] = f_{1,j} [X_1,[X_1, X_2]] + f_{2,j} [X_2, [X_1, X_2]] \bmod E^{-2}$. We then observe that
\begin{align*}
{[X_1, Z_1]} & = [X_1, [X_1, Y]] \bmod E^{-2} \\
& =  [X_1, [X_1, [X_1, X_2]]] + [X_1,[X_1, \eta_1 X_1 + \eta_2 X_2]]\bmod E^{-2} \\
& = f_{1,1} Z_1 + f_{2,1} Z_2 + \eta_2 Z_1 \bmod E^{-2}.
\end{align*}
Similarly, $[X_2, Z_2] =  f_{1,2} Z_1 + f_{2,2} Z_2 - \eta_1 Z_2.$ Hence, we have
$$f_{2,2} =\eta_1, \qquad f_{1,1} = -\eta_2,$$
which determines the connection.

\section{Contact manifolds} \label{sec:Contact}
\subsection{Heisenberg Carnot algebras}
Let $\mathfrak{g}_{-1}$ be a $2n$-dimensional vector space with a symplectic form $\varsigma$ and let $\mathfrak{g}_{-2} = \spn\{B\}$ be a one-dimensional vector
space with basis $B$. We introduce a Lie algebra structure on $\mathfrak{g}_- = \mathfrak{g}_{-2} \oplus \mathfrak{g}_{-1}$ by making $\mathfrak{g}_{-2}$ the center and defining
$$[A_1,A_2] = \varsigma(A_1,A_2) B, \qquad A_1, A_2 \in \frakg_-.$$
The algebra $\mathfrak{g}_-$ is called the \emph{Heisenberg algebra}. Let $\langle \cdot, \cdot \rangle$ be an inner product on~$\mathfrak{g}_{-1}$. Define a complex structure $J$ and symmetric map $\Lambda$ such that
$$\varsigma(v,w) = \langle v, \Lambda J w \rangle =  \langle v, J\Lambda w \rangle.$$
By rescaling $\varsigma$ and $B$ correspondingly, we may assume that the maximal eigenvalue of $\Lambda$ is $1$. The special case $\Lambda = \id$ is called \emph{the standard Heisenberg algebra}.

Since we completed the case $n=1$ in Section~\ref{sec:3dHeisenberg}, we will only consider $n \geq 2$.
Let $1 = \lambda[1] > \lambda[2] > \cdots > \lambda[l]$ be the eigenvalues of $\Lambda$, listed without repetition. Assume that the eigenspace of $\lambda[j]$ is of dimension $2 k[j]$. Then we can identify $\mathfrak{g}_-$ with $\mathbb{C}^n \times \mathbb{R}$, $z = (z[1], \dots, z[l]) \in \mathfrak{g}[1] \oplus \cdots \oplus \mathfrak{g}[l] =\mathbb{C}^{k[1]} \times \cdots \times \mathbb{C}^{k[l]}$, such that
$$[z_1 + c_1B, z_2 +c_2B] = \sum_{j=1}^l \lambda[j] \, \mathrm{Re} \langle iz_1[j],  z_2[j] \rangle_{\mathbb{C}^{k_j}} B.$$
If $\blambda =(\lambda[1], \lambda[2], \cdots ,\lambda[l])$, we write the corresponding Carnot algebra $\mathfrak{g}_- = \mathfrak{g}_-^{\blambda}$. In order for a linear map $S: \mathfrak{g}_- \to \mathfrak{g}_-$ to be an isometry, its restriction to $\mathfrak{g}_{-1}$ needs to preserve any subcomponent $\mathbb{C}^{k[j]}$ as well as being a unitary map of this space. We can then identify $S \in G_0$ with $(S[1], \dots, S[l]) \in U(k[1]) \times \cdots \times U(k[l])$, which allows us to identify $\mathfrak{g}_0$ with $\mathfrak{u}(k[1]) \times \cdots \times \mathfrak{u}(k[l])$.

\subsection{Contact manifolds with constant symbol}
We consider sub-Riemannian manifolds $(M,E,g)$ such that $M$ has dimension $2n+1$, while $E$ has rank $2n$. We say that it is a contact manifold if the mapping $\wedge^2 E \to TM/E$, $X,Y \mapsto [X,Y] \mod E$ is a non-degenerate skew-symmetric map. For such a sub-Riemannian manifold, it follows that $\gr_{x,-}$ is isomorphic to some $\mathfrak{g}_{-}^{\blambda}$ for any $x \in M$. The manifold has constant symbol if $\gr_{x,-}$ is isomorphic to the same $\mathfrak{g}_{-}^{\blambda}$ for every $x \in M$. Working locally, we can assume that $E$ is orientable. Let $\beta$ be the unique one-form, up to a sign, such that $\ker \beta=  E$, and
\begin{equation} \label{NormContactForm} \sup_{\begin{subarray}{c} v,w \in E_x \\ |v|=|w| =1
\end{subarray}} d\beta(v,w) =1, \qquad \text{for every $x \in M$}.\end{equation}
By definition we can write $d\beta(v,w) = \langle v, \Lambda J w \rangle_g$, $v,w \in E$, for which $J^* = -J$, $J^2 = -\id_E$ and where $\Lambda$ is a symmetric matrix with distinct eigenvalues $\lambda[1] =1 > \lambda[2] > \cdots >\lambda[l]$. Let $E[j]$ be the eigenspace of $\lambda[j]$ and write its dimension as~$2k[j]$.

\subsection{Conditions for curvature}
For $j=1,\dots, l$ and $r=1, \dots, k[j]$, write $e_r[j]$ for the standard basis of $\mathbb{C}^{k[j]}$. With slight abuse of notation, define
$$\tilde \chi =  \sum_{j=1}^l  \sum_{r=1}^{k[j]} \lambda[j] e_r[j] \wedge ie_r[j] \in \wedge^2 \mathfrak{g}_{-1},$$
so that
$$\partial_b = - \tilde \chi^* \wedge \iota_B, \qquad \partial_b^{-1} = - \frac{1}{|\tilde \chi|^2} B^* \wedge \iota_{\tilde \chi} .$$
We observe that $\partial_b \frakC^1_{\geq 1} = \spn\{ A \wedge \tilde \chi^*  \, : \, A \in \frakg_{-1} \oplus \frakg_0  \} $. We further see that
\begin{align*}
\partial \frakC_1^1 & = \spn \{ [s,\id] \wedge A_1^*   , A_2 \wedge \chi^* + [A_2,\id] \wedge B^*  \, : \, s \in \frakg_0, A_1, A_2 \in \frakg_{-1} \} \\
& = \spn \{ [s,\id] \wedge A_1^*    , [A_2, \id] \wedge B^*  \, : \, s \in \frakg_0, A_1, A_2 \in \frakg_{-1} \}  \mod \partial_b \frakC^1.
\end{align*}
Note that $\partial_b^{-1} \partial_b$ is the identity on elements $B^* \wedge [A_2, \id] $, so
\begin{align*}
(\id- \partial_b^{-1} \partial_b) \partial \frakC_1^1 & = \{  [s,\id] \wedge A_1^* \, : \, s \in \frakg_0, A_1 \in \frakg_{-1} \}.
\end{align*}
In conclusion, there is a unique connection satisfying
$$\kappa(\tilde \chi) = 0, \qquad \tr_{\frakg_{-1}} s\,\kappa(A, \cdot) =0, \qquad A \in \frakg_{-1}, s\in \frakg_0.$$
The latter is equivalent to
$$\langle A_1, \kappa(A_2, iA_3)\rangle = \langle- iA_1,  \kappa(A_2, A_3) \rangle , \qquad \langle A_1, \kappa(A_2, A_3) \rangle = \langle A_3, \kappa(A_2, A_1) \rangle,$$
with $A_2 \in \frakg_{-1}, A_1, A_3 \in \frakg_{-1}[j]$.

Seen from the manifold, we can define $\chi$ as a two-vector field such that if $X_1[j], \dots, X_{k[j]}[j], JX_1[j], \dots, JX_{k[j]}[j]$ is a local orthonormal basis of $E[j]$, then
$$\chi = \sum_{j=1}^l \sum_{r=1}^{k[j]} \lambda[j] X_{r}[j] \wedge JX_r[j].$$
We can verify that $\chi$ is independent of choice of basis, up to sign.
Conditions are that if $TM_{-2} = \spn \{ Z_{-2}\}$, then
$$R(\chi) =0, \qquad T(\chi) = \| \chi\|^2 Z_{-2}.$$
$$\langle v_1, T(v, Jv_2) \rangle = - \langle J v_1, T(v, v_2) \rangle, \qquad \langle v_1, T(v,v_2) \rangle = \langle v_2, T(v, v_1) \rangle.$$
for $v \in E$ and $v_1, v_2 \in E[j]$.

\subsection{Explicit description}
In order to give an explicit description of the connection, we first look at the case when the symbol is the standard Heisenberg algebra, i.e., when $\lambda_1 = \cdots =\lambda_n =1$.
Define the Reeb vector field $Z$ by the unique property that
\begin{equation} \label{Reeb} \beta(Z) = \|d\beta|_{\wedge^2 E} ||, \qquad d\beta(Z, \cdot) = 0.\end{equation}
In the case where the standard Heisenberg group is the symbol, we have $ \|d\beta|_{\wedge^2 E}\| = n$. Define $TM_{-2} = \spn \{ Z\}$ and let $g_I$ be the extension of the metric $g$
such that $Z$ becomes an orthonormal unit vector field. Let $\pr: TM \to E$ be the corresponding orthogonal projection to $E$ with kernel $TM_{-2}$. The space $\iso = \iso(E,g,I)$ then consist of maps
$s:TM \to TM$ such that $s(Z) = 0$, $s|_E$ is skew-symmetric with respect to the inner product and satisfying $sJ = Js$.

We can then define the partial connection
$$\tilde \nabla_X^E Y = \pr_E \nabla_X^{I} Y, \qquad X, Y \in \Gamma(E),$$
where $\nabla^{I}$ is the Levi-Civita connection of $g_I$. However, since this connection does not make $J$ parallel, we do not preserve our constant symbol structure, and we have to modify it
by defining
$$\nabla_X^E Y = \tilde \nabla_{X}Y + \frac{1}{2} (\tilde \nabla_X J) J.$$
We can extend this partial connection using the extension condition. We note that $T_1(X, Y) = \frac{1}{2}(\tilde \nabla_X J)J Y - \frac{1}{2}(\tilde \nabla_Y J)J X$, and since
$(\tilde \nabla_XJ) J$ is symmetric, $T_1$ will always be orthogonal to any $s \in \iso$. Furthermore, $T_1(X, JX)  = - \frac{1}{2} (\tilde \nabla_X J)X - \frac{1}{2} (\tilde \nabla_{JX} J)JX$, so we have
$$T_1(\chi) = \frac{1}{2n} \tr_{E} (\tilde \nabla _\times J)\times =0,$$
the latter equality proven in \cite[Lemma~7.1]{Gro20}. In summary, the connection $\nabla$ satisfies all the normalization conditions.

For the general case, we have a decomposition $E[1] \oplus \cdots \oplus E[l]$ corresponding to the eigenvalues of $d\beta$. Let $Z$ again denote the Reeb vector field defined by \eqref{Reeb},
where now $\|d\beta\| = \sum_{j=1}^l k[j] \cdot \lambda[j]$. For a general vector field $\Upsilon$ with values in $E$, define $TM_{-2} = \spn \{ Z - \Upsilon\}$ and let $\pr_\Upsilon: TM \to E$ denote the corresponding
projection to $E$ with $TM_{-2}$ as its kernel. Define an extended Riemannian metric $g_{\Upsilon}$ such that $Z-\Upsilon$ will be a unit vector orthogonal to $E$ with Levi-Civita connection~$\nabla^\Upsilon$. Write $\pr_\Upsilon[j]$ for the corresponding orthogonal projections to $E[j]$.
Finally, introduce the following operator
$$\langle \tau(X) Y, Y \rangle =  \frac{1}{2} \sum_{i=1}^l \sum_{i\neq j} (\calL_{\pr_\Upsilon[i] X} g_\Upsilon)(\pr_\Upsilon[j] Y, \pr_\Upsilon[j] Y_2), \qquad X, Y, Y_2 \in \Gamma(E).$$
Observe that the above expression is tensorial in all arguments, and that $\tau(X)$ will be a symmetric map such that $\tau(X) E[j] \subseteq E[j]$ for all $j=1, \dots, l$. We introduce now a partial connection
$$\text{for $X \in \Gamma(E[i])$ and $Y \in \Gamma(E[j])$}, \qquad
\tilde \nabla_X^E Y = \left\{ \begin{array}{ll} \pr_\Upsilon[j] \nabla_X^\Upsilon Y & \text{if $i = j$,} \\
\pr_\Upsilon[j] [X,Y] + \tau(X)Y & \text{if $i \neq j$.} \end{array} \right.$$
This partial connection is compatible with the metric and makes $E[j]$ parallel for each $j =1, \dots, l$. We modify it to make $J$ parallel in the same way as earlier $\nabla_X Y = \tilde \nabla_X Y + \frac{1}{2} (\tilde \nabla_X J) JY$.
It has been shown in \cite[Lemma~7.1]{Gro20} that $\tr_{E[j]} \tilde \nabla_{\times} J \times =0$, which gives us that
$$-T(\chi) = \| \chi \|^2 (Z - \Upsilon) + \frac{1}{2} \sum_{i=1}^l \sum_{i \neq j}\lambda[i] \tr_{E[i]} \pr_\Upsilon[j] [\times, J\times].$$
There exists a unique choice of $\Upsilon$ so that the second term vanishes which gives us that the condition $T_1(\chi) = \pr_{\Upsilon} T(\chi)=0$ holds. Explicitly, if we use that the latter condition is equivalent to $\pr_0 \pr_\Upsilon T(\chi) = 0$ and that
$$\pr_\Upsilon[j] [X,JX] = \pr_0[j] \pr_{\Upsilon} [X,JX] = \pr_0[j] [X,JX]- \lambda[i] \pr_0[j] \Upsilon .$$ for any unit vector field $X$ in $E[i]$,
\begin{align*}
0 &= \frac{1}{2} \sum_{i=1}^l \sum_{i \neq j}\lambda[i] \tr_{E[i]} \pr_\Upsilon[j] [\times, J\times] \\
& = \frac{1}{2} \sum_{i=1}^l \sum_{i \neq j}\lambda[i] \tr_{E[i]} \pr_0[j] [\times, J\times] - \sum_{i=1}^l \sum_{i \neq j}\lambda[i]^2 k[i] \pr_0[j] \Upsilon \\
& = \sum_{j=1}^l \left(\frac{1}{2}  \sum_{i \neq j}\lambda[i] \tr_{E[i]} \pr_0[j] [\times, J\times] - (\|\chi\|^2 - \lambda[j]^2 k[j]) \pr_0[j] \Upsilon \right).
\end{align*}
Hence, for $l = 1$, we use $\Upsilon =0$ and otherwise
$$\Upsilon =  \frac{1}{2} \sum_{j=1}^l \sum_{i \neq j} \frac{\lambda[i]}{\|\chi\|^2 - \lambda[j]^2 k[j]}  \tr_{E[i]} \pr_0[j] [\times, J\times].$$

\section{Maximal growth vector} \label{sec:MaxGrowth}
\subsection{The free Carnot algebra}
Let $\mathfrak{g}_{-1}$ be an inner product space and define
$$\mathfrak{g}_{-2} = \{ [X, Y] = X \otimes Y - Y \otimes X\, : \, X, Y \in \frakg_{-1} \}.$$
We avoid the use of the wedge product symbol not to confuse it with the wedge product between elements in $\frakC$. The Carnot algebra is called the free nilpotent algebra of step 2.
If $n_1> 1$ is the rank of $\frakg_{-1}$, then the growth vector is $(n_1, n_1(n+1)/2)$, with $n_1(n+1)/2$ being the maximal value that the growth vector can have. This Carnot algebra also have the largest possible size of $\frakg_0$ with rank $n_1(n_1-1)/2$.
If $S: \mathfrak{g}_{-1} \to \mathfrak{g}_{-1}$ is a linear isometry, then we can extend it to a Carnot algebra isometry by letting it work on $\frakg_{-2}$ by $S[X,Y] = [SX, SY]$. It follows that elements in $\frakg_0$ are skew-symmetric maps $s: \frakg_{-1} \to \frakg_{-1}$ extended by $s[X,Y] = [sX, Y] + [X,sY]$. We remark finally that $s \in \frakg_0$ applied to $\frakg_{-1}$ is spanned by elements of the form $A^*_1 \otimes A_2- A_2^* \otimes A_1$.

\subsection{Canonical condition for the curvature}
If $(M, E, g)$ is a sub-Riemannian manifold with growth vector $(n_1, n_1(n_1+1)/2)$, which then will have constant symbol given by the free nilpotent Lie algebra. Let $\psi$ be an adapted Cartan connection on the non-holonomic frame bundle $\scrF$, and with curvature $\kappa$. We want to express explicitly the extension and normalization condition for such a connection.

We compute the spaces of linear forms
$$\partial_b \frakC_{\geq 1}^1 = \spn\{  (s+B) \wedge A_1^* \wedge A_2^*  \, : \, A_1, A_2, B \in \frakg_{-1}, s \in
\frakg_0 \},$$
giving us the restriction
$$\kappa(A_1, A_2)  =0.$$
Next,
\begin{align*}
\partial &\frakC_{1}^1  = \spn \{ B \wedge A^* \wedge A_2^*  + [B, \id] \wedge [A_1, A_2]^* , [s,\id] \wedge A_1^*   \, : \, A_1, A_2, B \in \frakg_{-1}, s \in \frakg_0 \}  \\
& = \spn \{[B, \id]   \wedge[A_1, A_2]^* , [s,\pr_{-2} ] \wedge A^*  \, : \, A_1, A_2, B \in \frakg_{-1} , s \in \frakg_0\} \mod \partial_b \frakC_{\geq -1}^1.
\end{align*}
We observe that
$$(\id - \partial_b^{-1} \partial_b) [B_1,B_2]^* \wedge [A_1, A_2]^* = 0, $$
$$(\id - \partial_b^{-1} \partial_b) A_3^* \wedge [A_1, A_2]^* = A_3^* \wedge [A_1, A_2]^* - \frac{1}{3} \circlearrowright_{1,2,3} A_1^* \wedge [A_2, A_3]^*,$$
but since $\partial \kappa_1 =0$,
$$\langle \kappa_1 , \circlearrowright_{1,2,3} A_1^* \wedge [A_2, A_3]^* \rangle =  \circlearrowright_{1,2,3} \kappa(A_1, [A_2, A_3]) = \circlearrowright_{1,2,3} [A_1, \kappa(A_2, A_3)] =0.$$
Hence, the normalization conditions are, for $A_1, A_2, A_3 \in \frakg_{-1}, s \in \frakg_0$,
$$\tr_{\frakg_{-1}} \langle \kappa([A_1, A_2], \times), [A_3, \times] \rangle =0, \quad \tr_{\frakg_{-2}} \langle \kappa(A_1, \times), s\times \rangle=0, .$$
The latter can be written as
$$\tr_{\frakg_{-1}} \langle \kappa(A_1, [A_2, \times]), [A_3, \times] \rangle=\tr_{\frakg_{-1}} \langle \kappa(A_1, [A_3, \times]), [A_2, \times] \rangle.$$

\subsection{On the manifold}
We consider any sub-Riemannian manifold $(M, E,g)$ with growth vector $(n_1, n_1(n_1+1)/2)$. Since $\Gr_0|_E = \SO(E)$, any compatible partial connection $\nabla^E$ on $E$ can be extended to a strongly compatible connection.
Let $X_1, X_2, \dots, X_{n_1}$ be an orthonormal basis of $E$. We define a compatible connection
\begin{equation} \label{nablaFreeMu}\nabla_{X_k} X_j  = \nabla_{X_k}^E X_j= \sum_{i=1}^{n_1} \mu_{ij;k} X_i,  \qquad \mu_{ij;k} = - \mu_{ji;k},\end{equation}
for some functions $\mu_{ij;k}$.
Define also $TM_{-2} = \spn \{ Z_{ij}\}$ by
$$Z_{ij} = [X_i,X_j] +\sum_{r=1}^{n_1} (\mu_{ri;j} - \mu_{rj;i}) X_r,$$
and thus, $T_1(X_i,X_j) =0$.
By the extension condition
$$\nabla_{X_k} Z_{ij} = \sum_{r=1}^{n_1} (\mu_{ri;k} Z_{rj} - \mu_{rj;k} Z_{ri}), \qquad \nabla_{Z_{ij}} X_k = \sum_{r=1}^{n_1} (X_i \mu_{rk;j} - X_j\mu_{rk;i}) X_r.$$
Write coefficients $\nu_{ijk,rs}$ such that
$$[X_i, [X_j, X_k]] = \sum_{p<q} \nu_{ijk,pq} [X_p, X_q] \mod E, \qquad \nu_{ijk,pq} = -\nu_{ijk,pq},$$
which will also satisfy $\nu_{ijk;pq} = -\nu_{ikj;pq}$ and $\circlearrowright_{ijk} \nu_{ijk;pq} =0$.
We will only consider $n_1 >2$, as the case $n_1=2$ is in Section~\ref{sec:3dHeisenberg}.
\begin{proposition}
Define coefficients
$$\nu_{ij;k}^{(2)} = \sum_{r=1}^{n_1} \nu_{ijr;kr}.$$
Assume that $\nabla$ is generated by \eqref{nablaFreeMu} using the extension condition. Assume furthermore that it satisfies the normalization condition. Then for $i$, $j$, $k$ all different,
$$\mu_{ij;k} = \frac{1}{2(n_1-1)}(\nu^{(2)}_{kj;i}-\nu_{ki;j}^{(2)} ) + \frac{1}{2n_1(n_1-1)} (\nu_{ji;k}^{(2)} -\nu_{ij;k}^{(2)}).$$
Furthermore
\begin{enumerate}[\rm (a)]
\item if $n_1 > 3$, then for $i \neq j$
$$\mu_{ij;j} = \frac{1}{n_1-3} (\nu_{ij;j}^{(2)} - \nu_{jj;i}^{(2)}).$$
\item If $n_1 =3$, then for $i$, $j$, $k$ all different, taking all possible values, then
$$\mu_{ij;j} = \frac{2}{3}(\nu_{ij;j}^{(2)} - \nu_{ji;j}^{(2)}) - \frac{1}{3}(\nu_{ik;k}^{(2)} - \nu_{ki;k}^{(2)}).$$
\end{enumerate}
\end{proposition}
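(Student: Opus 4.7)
The plan is to translate the normalization condition, stated in the Cartan picture just before the proposition, into an explicit linear system for the unknowns $\mu_{ij;k}$ by first computing the degree-one torsion $T_1$, and then to solve this system in closed form, treating $n_1 = 3$ as a separate case. Because the extension data in \eqref{nablaFreeMu} already fixes the rest of $\nabla$, solving for the $\mu$'s completes the determination of the canonical Cartan connection.

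First, I compute $T_1(X_k, Z_{ij})$. Starting from $T(X_k, Z_{ij}) = \nabla_{X_k} Z_{ij} - \nabla_{Z_{ij}} X_k - [X_k, Z_{ij}]$ and inserting the prescriptions for $\nabla_{X_k} Z_{ij}$ and $\nabla_{Z_{ij}} X_k$, the term $\nabla_{Z_{ij}} X_k$ contributes only to the $E$-component and drops out of $T_1$. Expanding $[X_k, [X_i, X_j]]$ through the $\nu$-constants and using the identity $[X_p, X_q] = Z_{pq} - \sum_r (\mu_{rp;q} - \mu_{rq;p}) X_r$ yields, in $TM_{-2}$,
\[
T_1(X_k, Z_{ij}) = \sum_r \bigl(\mu_{ri;k} Z_{rj} - \mu_{rj;k} Z_{ri} - (\mu_{ri;j} - \mu_{rj;i}) Z_{kr}\bigr) - \sum_{p<q}\nu_{kij;pq} Z_{pq},
\]
with the convention $Z_{pq} = -Z_{qp}$. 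This is an explicit linear polynomial in the $\mu$'s with coefficients built from the $\nu$'s.

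Next, I transcribe the two normalization identities of the preceding subsection into linear equations. The first, $\sum_r \langle T_1(Z_{ij}, X_r), Z_{kr}\rangle = 0$, when expanded using the formula above and orthonormality of the $Z_{pq}$, produces precisely contractions of the form $\nu^{(2)}_{ab;c}$. The second normalization, the $(j,k)$-symmetry of $\sum_r \langle T_1(X_i, Z_{jr}), Z_{kr}\rangle$, yields an analogous equation. Sorting the resulting relations by the index pattern, I get (i) a $3$-cyclic subsystem in the unknowns $(\mu_{ij;k}, \mu_{jk;i}, \mu_{ki;j})$ whenever $i, j, k$ are distinct, and (ii) a scalar equation per pair $(i,j)$, $i \neq j$, for $\mu_{ij;j}$. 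To solve (i), I exploit the circulant structure: the cyclic sum fixes $\mu_{ij;k} + \mu_{jk;i} + \mu_{ki;j}$ from the $\nu^{(2)}$'s (producing the trace term with denominator $2n_1(n_1-1)$), while the skew part determines the individual components (producing the main term with denominator $2(n_1-1)$). The scalar equation in (ii) takes the form $(n_1 - 3)\mu_{ij;j} = \nu^{(2)}_{ij;j} - \nu^{(2)}_{jj;i}$, giving case (a) whenever $n_1 > 3$.

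The main obstacle is case (b): at $n_1 = 3$ the coefficient $n_1 - 3$ vanishes, so the seemingly scalar equation for $\mu_{ij;j}$ is not by itself sufficient. In this regime the correct equation couples the pair $\{\mu_{ij;j}, \mu_{ik;k}\}$ with $\{i,j,k\} = \{1,2,3\}$; one extra relation is obtained by substituting the first-normalization equation into the second and using the Bianchi-type identity $\circlearrowright_{ijk}\nu_{ijk;pq} = 0$ to reduce the $\nu$-content. The resulting $2 \times 2$ system then yields the asymmetric rational combination stated in (b). Uniqueness of the solution in both regimes follows from the uniqueness part of Theorem~\ref{th:main}.
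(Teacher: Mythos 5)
Your proposal follows essentially the same route as the paper: compute the $TM_{-2}$-component of the torsion $\langle T(Z_{ij},X_k),Z_{pq}\rangle$ explicitly in terms of the $\mu$'s and $\nu$'s, transcribe the two trace normalization conditions into linear equations sorted by index pattern, solve the cyclic subsystem for $\mu_{ij;k}$ with $i,j,k$ distinct, and obtain $\mu_{ij;j}$ from the remaining equations, with $n_1=3$ handled as a degenerate case via the coupled $2\times 2$ system in $\mu_{ij;j}$ and $\mu_{ik;k}$ coming from the trace term $\sum_r\mu_{ri;r}$. This matches the paper's proof in both structure and substance.
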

\begin{proof}
We first make the observation that
\begin{align*}
    \langle T(Z_{ij}&, X_k), Z_{pq} \rangle = \langle [X_k, Z_{ij}] - \nabla_{X_k} Z_{ij}, Z_{pq} \rangle \\
    & = \nu_{kij;pq} + (\mu_{qi;j}  -\mu_{qj;i})\delta_{kp} - (\mu_{pi;j} -\mu_{pj;i}) \delta_{kq} \\
    & \qquad - (\mu_{pi;k} \delta_{qj} - \mu_{pj;k} \delta_{qi}) + (\mu_{qi;k} \delta_{pj} - \mu_{qj;k} \delta_{pi}).
\end{align*}

The normalization conditions are then
\begin{align*}
0  &= \tr_E \langle T(T_0(X_i, X_j), \times), T_0(X_k, \times) \rangle  = \sum_{r=1}^{n_1} \langle T(Z_{ij}, X_r), Z_{kr} \rangle \\
& = \sum_{r=1}^{n_1}\nu_{rij;kr} - n_1 (\mu_{ki;j} -\mu_{kj;i})  + \sum_{r=1}^{n_1}(\mu_{ri;r} \delta_{kj} - \mu_{rj;r} \delta_{ki}) \\
& = -\nu_{ij;k}^{(2)} + \nu_{ji;k}^{(2)} - n_1 (\mu_{ki;j} -\mu_{kj;i})  + \sum_{r=1}^{n_1}(\mu_{ri;r} \delta_{kj} - \mu_{rj;r} \delta_{ki}).\end{align*}
Recall that these terms are skew-symmetric in $i,j$. For $j=k \neq i$, we have
\begin{align} \label{Norm1Free}
0  & = -\nu_{ij;j}^{(2)} + \nu_{ji;j}^{(2)} + n_1 \mu_{ij;j}  + \sum_{r=1}^{n_1}\mu_{ri;r},\end{align}
while for $i, j,k$ all different
\begin{align} \label{Norm2Free}
0  
& = -\nu_{ij;k}^{(2)} + \nu_{ji;k}^{(2)} - n_1 (\mu_{ki;j} -\mu_{kj;i}) .\end{align}
We also have normalization conditions, also skew-symmetric $i$ and $j$,
\begin{align*}
0 & = \tr_{E} \langle T(X_k, T_0(X_j, \times)), T_0(X_i, \times) \rangle -\tr_{\frakg_{-1}} \langle T(X_k, T_0(X_i, \times)), T_0(X_j, \times) \rangle \\
& = \sum_{r=1}^{n_1} \langle T(Z_{ir}, X_k), Z_{jr} \rangle - \sum_{r=1}^{n_1} \langle T(Z_{jr}, X_k), Z_{ir} \rangle \\ 
   & = \sum_{r=1}^{n_1} (\nu_{kir;jr} - \nu_{kjr;ir}) + 2 (n_1-1) \mu_{ij;k} +\mu_{jk;i} - \mu_{ik;j} + \sum_{r=1}^{n_1}(\mu_{ri;r}\delta_{kj} - \mu_{rj;r}\delta_{ki} ) 
        \end{align*}
which for $k=j \neq i $ equals
\begin{align} \label{Norm3Free}
0  & = \nu_{ji;j}^{(2)} - \nu_{jj;i}^{(2)} +  (2n_1-3) \mu_{ij;j}  + \sum_{r=1}^{n_1}\mu_{ri;r}  ,
     \end{align}
and for $i,j,k$ all different
\begin{align} \label{Norm4Free}
0  & = \nu_{ki;j}^{(2)} - \nu_{kj;i}^{(2)} + 2 (n_1-1) \mu_{ij;k} +\mu_{jk;i} - \mu_{ik;j} 
\end{align}

By combining \eqref{Norm2Free} and \eqref{Norm4Free} for $i$,$j$ and $k$ different, we get
$$\mu_{ij;k} = \frac{1}{2(n_1-1)}(\nu^{(2)}_{kj;i}-\nu_{ki;j}^{(2)} ) + \frac{1}{2n_1(n_1-1)} (\nu_{ji;k}^{(2)} -\nu_{ij;k}^{(2)}).$$
For $n_1 > 3$, we can subtract \eqref{Norm1Free} from \eqref{Norm3Free} to obtain that
$$\mu_{ij;j} = \frac{1}{n_1-3} (\nu_{ij;j}^{(2)} - \nu_{jj;i}^{(2)}).$$
For $n_1 = 3$, from \eqref{Norm1Free} we can insert respectively $(i,j)$ equal to $(1,2)$ and $(1,3)$, which equals
\begin{align*} 
2 \mu_{12;2}  - \mu_{13;3}  & = \nu_{12;2}^{(2)} - \nu_{21;2}^{(2)} \\
2 \mu_{13;3}  - \mu_{12;2}  & = \nu_{13;3}^{(2)} - \nu_{31;3}^{(2)},\end{align*}
which imply that
$$3 \mu_{12;2} = 2(\nu_{12;2}^{(2)} - \nu_{21;2}^{(2)}) - \nu_{13;3}^{(2)} + \nu_{31;3}^{(2)}.$$
We can then finally use a similar approach to all other cases to get the result.
\end{proof}

\bibliographystyle{habbrv}
\bibliography{Bibliography}

\end{document}